\documentclass{amsart}

\usepackage{amsmath,amssymb,amsfonts,mathtools}
\usepackage{enumitem}
\usepackage{microtype}
\usepackage{cite}
\usepackage[hidelinks]{hyperref}

\numberwithin{equation}{section}

\newtheorem{theorem}{Theorem}[section]
\newtheorem{lemma}[theorem]{Lemma}
\newtheorem{proposition}[theorem]{Proposition}
\newtheorem{corollary}[theorem]{Corollary}
\theoremstyle{definition}
\newtheorem{definition}[theorem]{Definition}
\theoremstyle{remark}
\newtheorem{remark}[theorem]{Remark}
\newtheorem{example}[theorem]{Example}

\newcommand{\R}{\mathbb{R}}
\newcommand{\defeq}{\triangleq}

\title[Fundamental Limits of Adaptive Stabilization]{Fundamental Limits of Adaptive Stabilization with an Unknown Growth Exponent}
\author{Zhaobo Liu}
\address{Institute for Advanced Study, Shenzhen University, Shenzhen, Guangdong 518060, China}
\email{liuzhaobo@szu.edu.cn}
\thanks{This work was supported in part by the National Natural Science Foundation of China under Grant 12401585, the Guangdong Basic and Applied Basic Research Foundation under Grant 2024A1515011542, and the General Program of the Shenzhen Natural Science Foundation under Grant JCYJ20250604181037012.}
\subjclass[2020]{93D21, 93D15, 93C55, 93C10}
\keywords{adaptive stabilization, critical exponent, feedback capability, discrete-time nonlinear systems, unknown growth exponent, bounded disturbances}
\date{}

\hypersetup{
  pdftitle={Fundamental Limits of Adaptive Stabilization with an Unknown Growth Exponent},
  pdfauthor={Zhaobo Liu}
}

\begin{document}

\begin{abstract}
A basic question in adaptive control is how rapidly a discrete-time nonlinear
plant with unknown parameters may grow while remaining stabilizable. When the
nonlinear growth exponent is known and only a scalar coefficient is unknown,
existing theory identifies $4$ as the exact critical exponent for
stabilizability. We determine how this critical exponent changes when the
growth exponent is also unknown. For any positive disturbance bound, one
feedback law stabilizes all plants in some neighborhood of a nominal
parameter pair if and only if the nominal exponent is below
$3\sqrt{3}/2$. At equality, every compact parameter set whose exponents do not
exceed this value remains stabilizable. If the exponent belongs to a known
finite set, the critical exponent remains $4$ for every nondegenerate compact
coefficient interval. Hence finite exponent sets and arbitrarily short
exponent intervals can have different critical exponents. Uncertainty in the
growth rate therefore changes the range of growth that feedback can
stabilize, not merely the size of the uncertainty set. The critical exponent
remains $3\sqrt{3}/2$ under a known positive state-dependent multiplier bounded
above and away from zero. For a system whose coefficient and exponent are known
functions of an unknown parameter, a compact parameter family is stabilizable
when its exponents do not exceed this value. At an interior parameter point
where the Jacobian of these functions has rank two and the exponent is at
least this value, no compact neighborhood is stabilizable.
\end{abstract}

\maketitle

\section{Introduction}\label{sec:introduction}

A central question in adaptive control is whether feedback can stabilize a
plant whose parameters are not known in advance
\cite{AstromWittenmark1995,ChenGuo1991,IoannouSun1996}. For scalar
discrete-time systems with superlinear drift, stabilizability can depend
sharply on the growth exponent. When the exponent is known and only a scalar
coefficient is unknown, the exact critical exponent is $4$. This value was
first obtained under stochastic noise \cite{Guo1997} and was later shown to
be exact under bounded disturbances \cite{LiXie2006}. In the respective
settings, adaptive stabilization is possible below $4$ and impossible at or
above $4$. Related work studies the capability and limitations of feedback for
broader classes of uncertain nonlinear systems \cite{Guo2002,Guo2020}. The
critical exponent $4$, however, presumes that the controller knows the growth
exponent.

Knowing the exponent fixes the state nonlinearity and leaves only its
coefficient to be learned. If the exponent is also unknown, the controller
must infer from the closed-loop trajectory both the scale of the drift and how
rapidly it changes with the state magnitude. Does uncertainty in the exponent
merely make adaptation more difficult, or does it change the range of nonlinear
growth that feedback can stabilize? This paper determines the corresponding
critical exponent when both the coefficient and the exponent are unknown.

We consider
\begin{equation}\label{eq:system}
 y_{t+1}=a\phi_b(y_t)+u_t+w_{t+1},
 \qquad
 \phi_b(y)\defeq \operatorname{sgn}(y)|y|^b,
\end{equation}
where $y_t\in\R$ is the state, $u_t\in\R$ is the control input, both the
coefficient $a\in\R$ and the exponent $b>0$ are unknown, and the disturbance
satisfies $|w_t|\le W$ for a fixed known $W>0$. The function $\phi_b$ agrees
with $y^b$ for $y\ge0$ and is used so that noninteger exponents are defined on
the whole real line. The controller knows the prescribed parameter set and
chooses $u_t$ causally from the observed state history. We ask whether one
feedback law can keep the state bounded for every parameter in some
neighborhood of a nominal parameter pair, every initial state, and every
disturbance sequence satisfying the bound.

At a fixed nonzero state magnitude, a change in the coefficient can offset a
change in the exponent, so one observed transition cannot distinguish
parameter pairs that produce different drifts at larger magnitudes. The
controller must therefore use observations at different state magnitudes while
keeping the closed-loop state bounded.

A central line of work on exact feedback limits with several unknown parameters
assumes that the functions of the state are prescribed in advance.
Under bounded disturbances, \cite{LiXieGuo2006} treats linear combinations of
finitely many known functions whose polynomial growth rates are strictly
ordered. The same assumption appears in a related least-squares result under
Gaussian noise \cite{LiLam2013} and in later work allowing more general growth
\cite{LiuLi2019,LiuLi2023}. Related studies address several unknown
coefficients \cite{XieGuo1999}, an unknown control coefficient
\cite{LiGuo2010}, nonparametric and semiparametric uncertainty
\cite{XieGuo2000,HuangGuo2012}, and impossibility results based on dynamical
inequalities \cite{LiGuo2013}. None directly covers \eqref{eq:system}, because
the unknown exponent changes the state function itself rather than a
coefficient multiplying a prescribed function.

Work on nonlinear parameterizations is closer to the present setting.
Li and Guo \cite{LiGuo2011} assume that the derivatives of the drift
with respect to the unknown parameters have strictly ordered polynomial
growth rates. For several unknown parameters, their conditions for
stabilization and impossibility do not coincide. In \eqref{eq:system},
when $a\ne0$ and $y\ne0$, the derivatives with respect to $a$ and $b$
have the same leading power $|y|^b$, with an additional logarithmic
factor in the latter. The required ordering therefore fails. Other
results require these derivatives to grow at most linearly
\cite{LiChen2014} or treat a single unknown scalar parameter
\cite{LiChen2016,Li2018,LiuLiCTA2019}. Consequently, these results do
not determine the exact critical exponent for \eqref{eq:system}.

To resolve this difficulty, we compare parameters consistent with observations
made at different state magnitudes and use the resulting bounds to construct a
stabilizing feedback law. For the converse, we construct an escaping state path
together with a convergent sequence of auxiliary parameter values. The limit
specifies one fixed plant, and a fixed bounded disturbance sequence makes the
constructed path a trajectory of that plant. When both the coefficient and
the exponent are unknown, the exact critical exponent is $3\sqrt{3}/2$. Robust
stabilization over a parameter neighborhood is possible below this value and
impossible at or above it. When the nominal exponent equals $3\sqrt{3}/2$,
however, every compact parameter set whose exponents do not exceed this value
remains stabilizable.
When the exponent is known and only the coefficient is unknown, the
corresponding critical exponent is $4$. Thus allowing the exponent to vary over
an arbitrarily small interval changes the range of nonlinear growth that
feedback can stabilize, not merely the difficulty of estimating the plant.

The main contributions are as follows:
\begin{enumerate}[label=\textup{(\roman*)}]
\item The stabilization theorem covers every compact parameter set whose
largest exponent is at most $3\sqrt{3}/2$, including the endpoint, and gives a
common state bound over the parameter set, every prescribed bounded set of
initial states, and all disturbance sequences bounded in magnitude by $W$.
Conversely, no compact rectangle containing in its interior a point whose
exponent exceeds $3\sqrt{3}/2$ is robustly stabilizable. For every
feedback law, the converse selects one fixed plant in the rectangle, an
arbitrarily large positive initial state, and one fixed bounded disturbance
sequence such that the resulting trajectory escapes.

\item When the exponent belongs to a known finite set, robust stabilization
over every nondegenerate compact coefficient interval is possible if and only
if the largest candidate exponent is below $4$. Finite exponent sets of
arbitrarily small positive diameter can therefore be stabilizable even when
their interval hulls are not. The difference between finite and continuous
exponent uncertainty is thus not explained by the size of the uncertainty set
alone.

\item  The critical exponent for continuous exponent uncertainty remains
$3\sqrt{3}/2$ when the nonlinear term in \eqref{eq:system} is multiplied by a
known positive function of the state that is bounded above and away from zero.
For a known nonlinear parameter map, stabilizability depends only on the image
of the parameter set in the coefficient--exponent plane. A compact image is
stabilizable if all its exponents are at most $3\sqrt{3}/2$. If the Jacobian
of the parameter map has rank two at an interior parameter point and the
mapped exponent is at least $3\sqrt{3}/2$, no compact neighborhood of that
point is robustly stabilizable. A finite exponent image is stabilizable
whenever its largest exponent is below $4$.
\end{enumerate}

The paper is organized as follows. Section~\ref{sec:formulation} gives the
information pattern and stability definitions, and Section~\ref{sec:results}
states the main results. Sections~\ref{sec:positive} and~\ref{sec:negative}
prove the stabilization and impossibility results, respectively.
Section~\ref{sec:conclusion} concludes the paper. The appendices contain the
technical estimates, the proof for a finite set of exponents, and the proofs
of the two extension results.

\section{Problem formulation}\label{sec:formulation}

For system~\eqref{eq:system}, write
$p=(a,b)\in\R\times(0,\infty)$ and
$G_p(y)=G_{a,b}(y)\defeq a\phi_b(y)$.
The true parameter $p^\star=(a^\star,b^\star)$ is unknown. For a fixed known
$W>0$, the disturbance sequence satisfies
\begin{equation}\label{eq:disturbance}
 \sup_{t\ge1}|w_t|\le W.
\end{equation}
At time $t$, the controller observes $(y_0,\ldots,y_t)$ before choosing $u_t$.
Neither $p^\star$ nor $w_{t+1}$ is observed.

\begin{definition}\label{def:causal-feedback}
A feedback law is a sequence $\mu=(\mu_t)_{t\ge0}$ of maps
$\mu_t:\R^{t+1}\to\R$ such that
$u_t=\mu_t(y_0,\ldots,y_t)$.
\end{definition}

\begin{definition}\label{def:global-robust}
Let $\mathcal P\subset\R\times(0,\infty)$ be nonempty, and suppose
that $\mathcal P$ is known to the controller. A feedback law $\mu$
robustly stabilizes \eqref{eq:system} over $\mathcal P$ if
\[
 \sup_{t\ge0}|y_t|<\infty
\]
for every $p^\star\in\mathcal P$, every $y_0\in\R$, and every disturbance
sequence satisfying \eqref{eq:disturbance}.
The set $\mathcal P$ is robustly stabilizable if such a feedback law exists.
\end{definition}

The feedback law may depend on $\mathcal P$ and $W$. The finite bound in
Definition~\ref{def:global-robust} may depend on $p^\star$, $y_0$, and the
disturbance sequence. This is the boundedness criterion in
\cite[Definitions~2.1--2.2 and Remark~2.1]{LiXie2006} and
\cite[Definitions~2.1--2.2]{LiXieGuo2006}. The positive results below prove the
stronger estimate that, for every $Y\ge0$, the same feedback law satisfies
\begin{equation}\label{eq:uniform-state-bound}
 \sup_{\substack{p^\star\in\mathcal P,\ |y_0|\le Y\\
 (w_k)_{k\ge1}\text{ satisfies }\eqref{eq:disturbance}}}
 \ \sup_{t\ge0}|y_t|<\infty.
\end{equation}

Li and Xie \cite{LiXie2006} study the case in which $b=b_0$ is known and the
unknown coefficient $a$ ranges over a prescribed interval
$\mathcal A\subset\R$. Their uncertainty set is therefore
$\mathcal A\times\{b_0\}$. When $b$ is also unknown, we ask whether
stabilizability persists when both parameters vary in a neighborhood of a
nominal point. This leads to the following definition.

\begin{definition}\label{def:local-robust}
A point $p_0=(a_0,b_0)\in\R\times(0,\infty)$ is locally robustly
stabilizable if there exists a compact rectangle
$Q\subset\R\times(0,\infty)$ such that $p_0\in\operatorname{int}Q$ and $Q$ is
robustly stabilizable.
\end{definition}

The problem studied in this paper is to characterize the set of points
$p_0\in\R\times(0,\infty)$ that are locally robustly stabilizable.

\section{Main results}\label{sec:results}

The main result characterizes local robust stabilizability when both the
coefficient and exponent are unknown. A second result identifies the critical
exponent when the exponent is restricted to a known finite set. The remaining
results treat a known positive state-dependent multiplier and nonlinear
parameter maps.

\subsection{Exact critical exponent}

\begin{theorem}\label{thm:main}
Fix $W>0$ and consider \eqref{eq:system} under the disturbance bound
\eqref{eq:disturbance}.
\begin{enumerate}[label=\textup{(\roman*)}]
\item \emph{Stabilization.} Let $\mathcal P\subset\R\times(0,\infty)$ be
nonempty and compact, and put
$\overline b=\max\{b:(a,b)\in\mathcal P\}$. If
$\overline b\le3\sqrt{3}/2$, then there is a feedback law satisfying
\eqref{eq:uniform-state-bound} for every $Y\ge0$. In particular, $\mathcal P$
is robustly stabilizable.

\item \emph{Impossibility.} Let
$p_0=(a_0,b_0)\in\R\times(0,\infty)$ with $b_0>3\sqrt{3}/2$. Then $p_0$ is
not locally robustly stabilizable. More precisely, for every compact rectangle
$Q\subset\R\times(0,\infty)$ with $p_0\in\operatorname{int}Q$ and every
feedback law and every $R_0>0$, there exist $p^\star\in Q$,
$y_0\ge R_0$, a disturbance sequence $(w_t)_{t\ge1}$ satisfying
\eqref{eq:disturbance}, constants $c>0$ and $z>1$, and
$T_0\in\mathbb N$ such that
\[
 \log|y_t|\ge cz^t
\]
for every $t\ge T_0$. In particular,
$\sup_{t\ge0}|y_t|=\infty$.
\end{enumerate}
\end{theorem}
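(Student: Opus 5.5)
The plan rests on a heuristic recursion for $x_t\defeq\log|y_t|$ that explains the value $3\sqrt3/2$. The drift is $\log|G_{a,b}(y)|=\log|a|+b\log|y|$, which is affine in $x=\log|y|$. A transition observed at $y_{s}$ determines $a\phi_b(y_s)$ up to an error $2W$, so it determines the log-drift at $x_s$ up to a relative error of order $e^{-bx_s}$. With both $a$ and $b$ unknown, predicting the drift at the current state $x_t$ requires extrapolating an affine function from two past magnitudes, say $x_{t-1}$ and $x_{t-2}$. The resulting relative error is dominated by the older and less accurate observation, up to factors polynomial in the $x$'s. The absolute prediction error, hence $|y_{t+1}|$ under certainty-equivalence control, is therefore of order $\exp\bigl(b(x_t-x_{t-2})\bigr)$ times a subexponential factor. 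If $x_t\approx cz^t$, this gives $z^3=b(z^2-1)$. A root $z>1$ exists iff $b\ge\min_{z>1}z^3/(z^2-1)=3\sqrt3/2$, attained at $z=\sqrt3$. For comparison, the known-exponent analogue $z^2=b(z-1)$ gives the threshold $4$.

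\emph{Stabilization.} I will use a set-membership estimator. Let $\mathcal P_t$ be the set of $p\in\mathcal P$ with $|y_{s+1}-u_s-G_p(y_s)|\le W$ for all $s<t$. This set is compact and contains $p^\star$. Set $u_t=-\tfrac12\bigl(\max_{p\in\mathcal P_t}G_p(y_t)+\min_{p\in\mathcal P_t}G_p(y_t)\bigr)$, so that $|y_{t+1}|\le\tfrac12\operatorname{diam}\{G_p(y_t):p\in\mathcal P_t\}+W$.

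The key lemma is a two-point comparison. Suppose $p,p'\in\mathcal P_t$ agree up to $2W$ at two past states $y_i,y_j$ with $i<j<t$ and $|y_i|\ge R$, where $R$ is large. Then
\[
|G_p(y_t)-G_{p'}(y_t)|\lesssim \mathrm{poly}(x_t)\,e^{b(x_t-x_i)}\,\frac{x_t-x_i}{x_j-x_i},
\]
with $b\le\overline b$. Bounded away from $a=0$ this follows from the mean value theorem in $(\log|a|,b)$; near $a=0$ the drift is small anyway, which I will check separately. Because $\mathcal P$ is compact, $G_p(y)$ is uniformly bounded on bounded states. Hence a large state can only arise after an escalation, and I will argue by contradiction on a sequence of record times with $x$ large.

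Along such records, applying the lemma with the pair of earlier records whose magnitudes are separated by at least a fixed factor gives $x_{t+1}\le\overline b\,(x_t-x_{t-2})+O(\log x_t)$. I then show that this inequality, combined with $x_{t+1}>x_t>\dots$, forces a contradiction whenever $\overline b\le3\sqrt3/2$, and that the contradiction is uniform over $|y_0|\le Y$, which yields \eqref{eq:uniform-state-bound}. If two consecutive magnitudes are too close, I use the previous well-separated pair instead; this loses only a constant in the recursion.

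The main obstacle is the endpoint $\overline b=3\sqrt3/2$. There the characteristic polynomial $z^3-\overline b z^2+\overline b$ has a double root at $\sqrt3$, so the linear recursion alone admits growth like $t\,3^{t/2}$. The $O(\log x_t)$ and separation-ratio corrections must then be shown to push in the stabilizing direction. I would first try the normalized quantities $r_t=x_{t+1}/x_t$ and show that $r_t\to\sqrt3$ is incompatible with the perturbation terms. Failing that, I would sharpen the lemma so that the factor $(x_t-x_i)/(x_j-x_i)$ enters with a favorable sign.

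\emph{Impossibility.} Fix $Q$, $\mu$ and $R_0$, and choose $b_1$ with $3\sqrt3/2<b_1<b_0$ close to $b_0$ and $z>1$ a root of $z^3=b_1(z^2-1)$. I will construct, inductively, a path $y_0\ge R_0$, disturbances $|w_t|\le W$, and nested compact sets $Q\supset K_0\supset K_1\supset\cdots$ around $p_0$ such that every $p\in K_t$ is consistent with the path up to time $t$, meaning $y_{s+1}-u_s-G_p(y_s)\in[-W,W]$. At step $t$, $K_t$ still contains two parameters whose drifts at $y_t$ differ by about $e^{b_1(x_t-x_{t-2})}$, because they are constrained only to within $W$ at earlier states. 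Whatever $u_t=\mu_t(y_0,\dots,y_t)$ is, one of the two half-sets yields $|y_{t+1}|\gtrsim e^{b_1(x_t-x_{t-2})}$. I keep that half as $K_{t+1}$, shrunk so that its drift values at $y_t$ have width $\le2W$ around the realized $y_{t+1}-u_t$. The resulting growth satisfies $x_{t+1}\ge b_1(x_t-x_{t-2})-C$, so $x_t\ge cz^t$ for large $t$ provided $x_0$ is large. Compactness then gives a point $p^\star\in\bigcap_tK_t$, and I set $w_{t+1}=y_{t+1}-u_t-G_{p^\star}(y_t)$, which satisfies $|w_{t+1}|\le W$.

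The delicate point is to show that the two-dimensional family of consistent parameters never degenerates. It is a strip in $(\log a,b)$ coordinates whose width after the constraint at $x_t$ is about $e^{-bx_t}$. Slopes differing by about $e^{-bx_{t-2}}/(x_{t-1}-x_{t-2})$ must remain available, which is exactly where the strict inequality $b_0>3\sqrt3/2$ and the interior of $Q$ are used.
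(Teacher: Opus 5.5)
Your outline for $\overline b<3\sqrt3/2$ is sound, but the endpoint $\overline b=3\sqrt3/2$, which the statement includes, is a genuine gap, and neither remedy you propose can close it. The two-anchor recursion alone cannot give boundedness there. At $\overline b=3\sqrt3/2$ one has $z^3-\overline bz^2+\overline b=(z-\sqrt3)^2(z+\sqrt3/2)$. Hence $x_t=(A+Bt)3^{t/2}$ with $A,B>0$ satisfies $x_{t+1}=\overline b(x_t-x_{t-2})$ exactly, and it also satisfies $x_{t+1}>x_t+\log2$ and $x_{t+1}\le\overline bx_t+C$. The correction terms you hope to exploit are nonnegative, since the separation ratio tends to $\sqrt3>1$, so they only add growth.

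Worse, your full-set midpoint controller appears to actually fail at the endpoint. Take $\mathcal P=[a_1,a_2]\times[b_-,\overline b]$ with $0<a_1<a^\star<a_2$, put $p^\star=(a^\star,\overline b)$ on the top face, set $\alpha^\star=\log a^\star$, and let $w_t\equiv W$. In coordinates $(\log a,b)$, every parameter $(\alpha^\star+\delta x_{t-1},\overline b-\delta)$ with $0\le\delta\le\delta_t$, where $\delta_t\approx2We^{-\alpha^\star-\overline bx_{t-2}}/(x_{t-1}-x_{t-2})$, is consistent with all data. Its drift at $y_t$ falls short of the true drift by the factor $e^{-\delta(x_t-x_{t-1})}$. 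No feasible parameter exceeds the true drift by more than a negligible amount, because every feasible $b$ satisfies $b\le b^\star$. The midpoint therefore sits about half this spread below $G_{p^\star}(y_t)$, and
\[
 x_{t+1}=\overline b(x_t-x_{t-2})+\log\frac{x_t-x_{t-1}}{x_{t-1}-x_{t-2}}+O(1).
\]
Starting from $x_1\approx\overline bx_0$ and $x_2\approx\overline bx_1$, one gets $B\approx\tfrac23x_0>0$, so the state escapes like $t\,3^{t/2}$. Sharpening your lemma cannot help, because for this controller the lemma is essentially attained.

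The missing idea is to bias certainty equivalence toward the largest exponent. The paper takes the midpoint only over $\mathcal F_t=\operatorname*{arg\,max}_{(a,b)\in\mathcal P_t}b$. Since $p^\star\in\mathcal P_t$, every $(a',b')\in\mathcal F_t$ has $b^\star\le b'\le\overline b$, so the low-exponent parameters that produced the spread above are discarded. Comparing $p'\in\mathcal F_t$ with $p^\star$ gives the two-anchor recursion with the true exponent $b^\star$, not $\overline b$. When $\overline b-b^\star\le\kappa e^{-\overline bL_n}/(1+L_n)$, one anchor suffices by Lemma~\ref{lem:one-anchor-upper-face}, giving $L_{n+1}\le\overline b(L_n-L_{n-1})+C_0$, whose threshold is $4$. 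Neither estimate is uniform as $b^\star\uparrow\overline b$. Lemma~\ref{lem:record-closure} combines them by contradiction, with a diagonal compactness argument over trajectories. A limit with $b_\infty<3\sqrt3/2$ violates $r\le b_\infty(1-r^{-2})$. A limit with $b_\infty=\overline b$ obeys the one-anchor recursion, which forces $\overline b\ge4$. Separately, ``near $a=0$ the drift is small'' does not make your thresholds uniform, since they depend on a lower bound for $|a^\star|$. The paper obtains one by applying zero input until the first exit from $[-2W,2W]$ (Lemma~\ref{lem:first-exit-reduction}).

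For the converse, your mechanism and scaling are right, but the nondegeneracy step you flag is the heart of the proof and remains open. The paper avoids nested sets. It uses points $\eta_n$ with $\eta_n-\eta_{n-1}\in\{0,s_{n-2}v_n\}$ and $v_n=(L_{n-1},-1)/(L_{n-1}-L_{n-2})$. Each move leaves the realized drift at $y_{n-1}$ unchanged and changes the log-drift at earlier levels by at most $\tfrac{z}{z-1}s_{n-2}$. The step sizes $s_k\propto e^{-\alpha_+-b_+L_k}$ halve each time, so all later moves perturb the drift at $y_k$ by at most $W/4$. Hence the limit parameter is automatically consistent with every observation. Also, the growth bound produced is $b_-L_n-b_+L_{n-2}-C$. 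Closing the induction $L_{n+1}\ge zL_n$ needs the strict margin $b_--b_+/z^2>z$, so $z$ must lie strictly between the two roots of $z^3=b_1(z^2-1)$ (Lemma~\ref{lem:critical-constant}), not at a root.
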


Theorem~\ref{thm:main}(i) includes compact parameter sets whose largest
exponent equals $3\sqrt{3}/2$. If a parameter point has exponent
$3\sqrt{3}/2$, however, every rectangle containing that point in its interior
also contains parameters with larger exponents. The exact local classification
is therefore as follows.

\begin{corollary}\label{cor:local-threshold}
A point $p_0=(a_0,b_0)\in\R\times(0,\infty)$ is locally robustly
stabilizable if and only if
\[
 b_0<\frac{3\sqrt{3}}{2}.
\]
\end{corollary}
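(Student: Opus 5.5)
The corollary is a direct consequence of Theorem~\ref{thm:main}. I would prove the two implications separately, each time using one part of the theorem together with an elementary choice of rectangle.

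For sufficiency, assume $b_0<3\sqrt3/2$. I would pick $\delta>0$ with
\[
\delta<b_0 \quad\text{and}\quad b_0+\delta\le 3\sqrt3/2,
\]
and set
\[
Q=[a_0-1,a_0+1]\times[b_0-\delta,b_0+\delta].
\]
This $Q$ is a compact rectangle contained in $\R\times(0,\infty)$, because $b_0-\delta>0$. It is nonempty, it contains $p_0$ in its interior, and its largest exponent is
\[
\overline b=b_0+\delta\le 3\sqrt3/2.
\]
Theorem~\ref{thm:main}(i) therefore gives a feedback law that robustly stabilizes $Q$. Hence $p_0$ is locally robustly stabilizable in the sense of Definition~\ref{def:local-robust}.

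For necessity, I would argue by contraposition and assume $b_0\ge 3\sqrt3/2$. Suppose, for contradiction, that some compact rectangle $Q=[a_1,a_2]\times[b_1,b_2]$ with $p_0\in\operatorname{int}Q$ is robustly stabilizable. Interiority gives $b_0<b_2$ and $a_1<a_0<a_2$. I would then define the auxiliary point
\[
p_1=\Bigl(a_0,\tfrac{b_0+b_2}{2}\Bigr).
\]
This point also lies in $\operatorname{int}Q$, and its exponent satisfies
\[
\tfrac{b_0+b_2}{2}>b_0\ge 3\sqrt3/2.
\]
Theorem~\ref{thm:main}(ii) applies at $p_1$ with this same rectangle $Q$. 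It says that for every feedback law there are a plant $p^\star\in Q$, an initial state, and an admissible disturbance sequence for which $\sup_t|y_t|=\infty$. This contradicts the assumption that $Q$ is robustly stabilizable. So no such $Q$ exists, and $p_0$ is not locally robustly stabilizable.

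No step here is a real obstacle; the substantive content is all in Theorem~\ref{thm:main}. The only points needing care are the following:
\begin{itemize}
\item The rectangle used for sufficiency must stay inside $\R\times(0,\infty)$, which is why $\delta<b_0$ is required.
\item The boundary case $b_0=3\sqrt3/2$ is not covered by part (ii) applied at $p_0$ itself. It is handled by moving to the interior point $p_1$ with strictly larger exponent, which is possible because $Q$ contains $p_0$ in its interior.
\end{itemize}
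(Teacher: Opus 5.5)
Your proposal is correct and follows the paper's proof. Sufficiency applies Theorem~\ref{thm:main}(i) to a small rectangle whose top exponent is at most $3\sqrt3/2$, and necessity passes to an interior point of $Q$ with exponent strictly above $3\sqrt3/2$ and invokes Theorem~\ref{thm:main}(ii). The only difference is cosmetic: you handle $b_0>3\sqrt3/2$ and $b_0=3\sqrt3/2$ uniformly through $p_1$, whereas the paper applies part~(ii) directly at $p_0$ in the strict case.
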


\begin{proof}
If $b_0<3\sqrt{3}/2$, choose a compact rectangle $Q$ with
$p_0\in\operatorname{int}Q$ and largest exponent below $3\sqrt{3}/2$, and
apply Theorem~\ref{thm:main}(i). If $b_0>3\sqrt{3}/2$, apply
Theorem~\ref{thm:main}(ii).

Suppose $b_0=3\sqrt{3}/2$ and let $Q$ be any compact rectangle with
$p_0\in\operatorname{int}Q$. Choose
$p_1=(a_1,b_1)\in\operatorname{int}Q$ with $b_1>3\sqrt{3}/2$.
Theorem~\ref{thm:main}(ii) shows that $p_1$ is not locally robustly
stabilizable. If $Q$ were robustly stabilizable, it would witness the local
robust stabilizability of $p_1$, a contradiction.
\end{proof}

\begin{remark}\label{rem:bounded-random-disturbances}
The feedback law constructed in Section~\ref{sec:positive} is Borel measurable
by Lemma~\ref{lem:borel-controller}. Hence Theorem~\ref{thm:main}(i) applies
pathwise for every deterministic initial state whenever
$\sup_{t\ge1}|w_t|\le W$ almost surely. The resulting state trajectory is then
bounded almost surely, without any independence assumption.
\end{remark}

\subsection{Finite exponent uncertainty}

Corollary~\ref{cor:local-threshold} characterizes local robust stabilizability
when the exponent ranges over a neighborhood. We next determine the critical
exponent when the exponent is restricted to a known finite set.

\begin{theorem}\label{thm:finite-set}
Let $\mathcal A=[a_{\min},a_{\max}]\subset\R$ be a nondegenerate compact interval,
which may contain zero, and let $\mathcal B\subset(0,\infty)$ be a nonempty
finite set.
The family $\mathcal A\times\mathcal B$ is robustly stabilizable if and only if
$\max_{b\in\mathcal B}b<4$. When the maximum is below $4$, there is a
feedback law satisfying \eqref{eq:uniform-state-bound} with
$\mathcal P=\mathcal A\times\mathcal B$ for every $Y\ge0$.
\end{theorem}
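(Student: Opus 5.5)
Both directions reduce to the known-exponent theory of Li and Xie \cite{LiXie2006}, plus a way to identify the active exponent from data.

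\emph{Necessity.} Suppose $\overline b\defeq\max\mathcal B\ge4$. Then $\mathcal A\times\{\overline b\}\subset\mathcal A\times\mathcal B$ is a nondegenerate interval of coefficients with a known exponent at least $4$. The impossibility half of \cite{LiXie2006} says no feedback law stabilizes it under bounded disturbances. Any law stabilizing the larger family would stabilize this subfamily, so $\mathcal A\times\mathcal B$ is not robustly stabilizable. One point needs checking: the construction in \cite{LiXie2006} must be valid for every nondegenerate interval, including ones containing zero or of arbitrarily small length, and for every $W>0$. If their statement is normalized, a rescaling of $y$ and $a$ should reduce to it, since $\phi_b(\lambda y)=\lambda^b\phi_b(y)$.

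\emph{Sufficiency.} Write $\mathcal B=\{b_1<\dots<b_m\}$ with $b_m<4$. I would run an elimination scheme.
\begin{enumerate}[label=(\alph*)]
\item Keep the set $\Theta_t$ of pairs $(a,b)\in\mathcal A\times\mathcal B$ consistent with the data so far:
\[
|y_{k+1}-u_k-a\phi_b(y_k)|\le W \quad\text{for all } k<t.
\]
For each fixed $b$, the consistent $a$ form an interval $I_t(b)$, which is an intersection of intervals.
\item Choose $u_t$ as follows. If only one exponent $b$ survives, use the Li--Xie controller for that $b$: the center of $I_t(b)$ times $\phi_b(y_t)$, with their specific choice of estimate.
\item If several exponents survive, and $|y_t|$ is large, pick the control that cancels the midpoint prediction over all surviving pairs.
\end{enumerate}

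The key separation fact is this. Take $b\ne b'$ and $|y|\ge r$, where $r$ is large. Then
\[
\sup_{a\in\mathcal A,\,a'\in\mathcal A}\ \frac{|a\phi_b(y)-a'\phi_{b'}(y)|}{|y|^{\max(b,b')}}
\]
cannot stay bounded above by a small quantity on two observations at widely different magnitudes. Concretely, two surviving exponents that both fit transitions at magnitudes $|y_k|$ and $|y_j|$ with $|y_j|\gg|y_k|$ force $|a-a'|$ and the power mismatch to be tiny. Since the exponents are finitely many and bounded away from each other, this yields a contradiction once $|y_j|/|y_k|$ exceeds a fixed factor, except when $a=a'=0$-type degeneracies occur. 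Those degeneracies contribute only bounded drift. Hence after at most $m-1$ "large jumps" only one exponent (or only coefficients near zero) remains, and the trajectory is then handled by the single-exponent argument.

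\emph{Uniformity.} Starting from $|y_0|\le Y$ gives a bound depending only on $Y$, $W$, $\mathcal A$ and $\mathcal B$. This holds because the number of elimination episodes is at most $m-1$, and each episode's growth is controlled by the Li--Xie recursion. That recursion bounds $\log|y_t|$ by a sequence with growth rate governed by the root of $x^2-bx+b=0$, which is stable when $b<4$.

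\emph{Main obstacle.} The hard step is the transient phase in which several exponents are still consistent. There the prediction error is $\max|a\phi_b-a'\phi_{b'}|$ and could be of order $|y_t|^{b_m}$. I must show that this error either is small (comparable to the single-exponent error) or causes elimination at the next step. My first attempt would be an amplitude-ratio case split:
\begin{itemize}
\item if the interval widths $|I_t(b)|$ times $|y_t|^{b}$ dominate, then the Li--Xie potential argument applies jointly to all surviving exponents, taking the worst over them;
\item otherwise, the cross-exponent discrepancy $|y_t|^{b_j}$ versus $|y_t|^{b_i}$ is large relative to $W$, and the next observation excludes all but one class of exponents.
\end{itemize}
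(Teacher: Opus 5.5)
Your necessity argument matches the paper's. The point you flag is settled by passing to a nondegenerate compact subinterval $I\subset\mathcal A\setminus\{0\}$, since a law stabilizing a family stabilizes every subfamily, and applying \cite[Theorem~2.3]{LiXie2006} to $I\times\{b_j\}$ with $b_j\ge4$; no rescaling is needed.

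Sufficiency has a genuine gap, exactly at the step you call the main obstacle. Your elimination of exponents needs two things: the surviving coefficients must share one sign, and the drift at the anchor observation must be large compared with $W$. When $0\in\mathcal A$ neither is available. Any transition with $|y_{k+1}-u_k|\le W$ is consistent with $(0,b)$ for every $b\in\mathcal B$, so several exponents can survive observations at arbitrarily large states. Hence no bound on the number of ``large jumps'' follows. Your remedy, that such degeneracies ``contribute only bounded drift,'' is circular: a coefficient $\epsilon$ produces drift $\epsilon|y|^b$, which is bounded only if the state already is. The amplitude-ratio case split is not carried out, so you have no closing recursion for the phase in which several exponents remain feasible. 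You also have no bound on the state at the moment one exponent is isolated, depending only on $Y,W,\mathcal A,\mathcal B$, and you need one to restart the known-exponent argument uniformly. A smaller point: for $1<b<4$ the roots of $x^2-bx+b$ have modulus $\sqrt b>1$, so ``stable'' is the wrong word. What matters is that the roots are non-real, so increasing record levels cannot persist. Proposition~\ref{prop:scalar} makes this quantitative through $\gamma=\inf_{r\ge1}\{r-b(1-r^{-1})\}>0$.

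The missing idea is the paper's zero-input first exit.
\begin{itemize}
\item Apply $u_t=0$ until $t_{\rm a}=\inf\{s\ge1:|y_s|>2W\}$. If this never happens, the state stays in $[-2W,2W]$.
\item Otherwise, because $u_{t_{\rm a}-1}=0$, every feasible $(a,b)$ satisfies $|a|\,|y_{t_{\rm a}-1}|^b>W$ with $|y_{t_{\rm a}-1}|\le S=\max\{1,Y,2W\}$. So all feasible coefficients share one sign and satisfy $W/S^{\overline b}<|a|\le\overline a$ (Lemma~\ref{lem:first-exit-reduction}). This certificate is lost if the input before the exit is nonzero.
\item With coefficient magnitudes in $[\underline a,\overline a]$ and $\mathcal B$ finite, choose $R_*$ with $\underline aR^{b_k}-\overline aR^{b_j}>2W$ for all $R\ge R_*$ and all $b_j<b_k$ in $\mathcal B$.
\item A \emph{single} observed transition from the first state with $|y_\tau|\ge R_*$ then eliminates every exponent except $b^\star$. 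There is no multi-exponent transient at large states.
\item Before $\tau$ the state is below $R_*$, so $|y_\tau|$ and $|y_{\tau+1}|$ obey explicit bounds.
\item From $\tau+1$ on, the midpoint controller coincides with the known-exponent one. Its bound in Proposition~\ref{prop:scalar} depends on the data-dependent coefficient interval only through $\max|a|$ (Lemma~\ref{lem:finite-away-zero}).
\end{itemize}
Your two-observation separation is therefore more than is needed once the coefficients are certified away from zero. It also does not apply to the near-zero coefficients that this certification rules out.
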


For the converse, nondegeneracy guarantees that $\mathcal A$ contains a
nondegenerate compact subinterval disjoint from zero.

Theorem~\ref{thm:finite-set} includes the case of a single candidate exponent.
To isolate the effect of allowing the exponent to vary continuously, we next
compare a finite set with its nondegenerate interval hull.

\begin{corollary}
\label{cor:finite-continuous}
Let $\mathcal A\subset\R$ be a nondegenerate compact interval, and let
$\mathcal B\subset(0,\infty)$ be a nonempty finite set. Put
$b_{\min}=\min_{b\in\mathcal B}b$ and
$b_{\max}=\max_{b\in\mathcal B}b$. If $b_{\min}<b_{\max}$ and
$3\sqrt{3}/2<b_{\max}<4$, then $\mathcal A\times\mathcal B$ is
robustly stabilizable, whereas
$\mathcal A\times[b_{\min},b_{\max}]$ is not.
\end{corollary}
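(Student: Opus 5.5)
The first assertion follows from Theorem~\ref{thm:finite-set}: since $b_{\max}<4$, the family $\mathcal A\times\mathcal B$ is robustly stabilizable. Note that Theorem~\ref{thm:finite-set} needs no assumption on $b_{\min}$ or on $b_{\max}$ relative to $3\sqrt{3}/2$. So the remaining work is to show that $\mathcal A\times[b_{\min},b_{\max}]$ is not robustly stabilizable. Here I will use Theorem~\ref{thm:main}(ii) together with the observation that robust stabilizability passes to subsets: a feedback law that stabilizes over a set $\mathcal P$ also stabilizes over every nonempty $\mathcal P'\subset\mathcal P$. This holds because Definition~\ref{def:global-robust} quantifies over each $p^\star$ separately, and the law may simply ignore the extra knowledge that $p^\star\in\mathcal P'$.

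Write $\mathcal A=[a_{\min},a_{\max}]$ with $a_{\min}<a_{\max}$. Since $b_{\min}<b_{\max}$ and $b_{\max}>3\sqrt{3}/2$, the open interval $(\max\{b_{\min},3\sqrt{3}/2\},b_{\max})$ is nonempty. Choose $b_0$ in this interval and put $a_0=(a_{\min}+a_{\max})/2$. Then $p_0=(a_0,b_0)$ lies in the interior of the compact rectangle $Q=\mathcal A\times[b_{\min},b_{\max}]\subset\R\times(0,\infty)$, because $b_{\min}>0$. Since $b_0>3\sqrt{3}/2$, Theorem~\ref{thm:main}(ii) applies with this rectangle $Q$. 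It shows that for every feedback law there are $p^\star\in Q$, an initial state, and a disturbance sequence satisfying \eqref{eq:disturbance} such that $\sup_t|y_t|=\infty$. This is precisely the statement that $Q$ is not robustly stabilizable.

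The subset observation above is not even needed, since Theorem~\ref{thm:main}(ii) is stated for every compact rectangle containing $p_0$ in its interior, and $Q$ is itself such a rectangle. The only point requiring care is checking that $p_0$ is genuinely interior. This uses both nondegeneracy hypotheses: $\mathcal A$ has nonempty interior, and $b_{\min}<b_{\max}$, so the strict inequalities $b_{\min}<b_0<b_{\max}$ are possible together with $b_0>3\sqrt{3}/2$. I expect no real obstacle; the substance lies entirely in the earlier theorems.
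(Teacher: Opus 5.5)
Your proposal is correct and follows the paper's proof. You obtain stabilizability of the finite family from Theorem~\ref{thm:finite-set}, then pick $a_0\in\operatorname{int}\mathcal A$ and $b_0\in(b_{\min},b_{\max})$ with $b_0>3\sqrt{3}/2$ and invoke the impossibility result. The paper does the same, except that it cites Corollary~\ref{cor:local-threshold} for the last step; for $b_0>3\sqrt{3}/2$ that corollary simply cites Theorem~\ref{thm:main}(ii), so applying Theorem~\ref{thm:main}(ii) directly with $Q=\mathcal A\times[b_{\min},b_{\max}]$ is equivalent.
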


\begin{proof}
By Theorem~\ref{thm:finite-set}, the family $\mathcal A\times\mathcal B$ is
robustly stabilizable because $b_{\max}<4$. Since
$b_{\max}>3\sqrt{3}/2$, choose $a_0\in\operatorname{int}\mathcal A$ and
$b_0\in(b_{\min},b_{\max})$ such that $b_0>3\sqrt{3}/2$.
Then $(a_0,b_0)$ lies in the interior of
$\mathcal A\times[b_{\min},b_{\max}]$ and is not locally robustly
stabilizable by Corollary~\ref{cor:local-threshold}. Hence
$\mathcal A\times[b_{\min},b_{\max}]$
cannot be robustly stabilizable.
\end{proof}

The finite exponent set and its interval hull have the same minimum, maximum,
and diameter, yet only the finite family is stabilizable. Since the finite set
can lie in an arbitrarily short subinterval of $(3\sqrt{3}/2,4)$, the gap
persists for arbitrarily small positive diameter.

\subsection{Extensions of the model}

The first extension considers a known positive state-dependent multiplier
that is bounded above and away from zero.
For this family, robust and local robust stabilizability are defined as in
Definitions~\ref{def:global-robust} and~\ref{def:local-robust}, with the
dynamics below in place of \eqref{eq:system}.

\begin{corollary}\label{cor:shape-factor}
Let $h:\R\to(0,\infty)$ be known and continuous, and suppose that known
constants $\underline h$ and $\overline h$ satisfy
$0<\underline h\le h(y)\le\overline h<\infty$ for every $y\in\R$.
For the family
\[
 y_{t+1}=a h(y_t)\phi_b(y_t)+u_t+w_{t+1},
 \qquad a\in\R,\quad b>0,
\]
every nonempty compact parameter set
$\mathcal P\subset\R\times(0,\infty)$ whose largest exponent is at most
$3\sqrt{3}/2$ admits a feedback law satisfying
\eqref{eq:uniform-state-bound} for every $Y\ge0$. Moreover, a point
$p_0=(a_0,b_0)\in\R\times(0,\infty)$ is locally robustly stabilizable if and
only if $b_0<3\sqrt{3}/2$.
For $b_0>3\sqrt{3}/2$, the escape conclusion in
Theorem~\ref{thm:main}(ii) also holds for this family.
\end{corollary}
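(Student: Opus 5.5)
The plan is to reduce Corollary~\ref{cor:shape-factor} to the arguments behind Theorem~\ref{thm:main} rather than re-prove everything. For each parameter $p=(a,b)$ write the drift as $a h(y)\phi_b(y)$. Since $h$ is known, the controller can form $h(y_t)$ at every step, so the structural facts the proof of Theorem~\ref{thm:main} relies on should carry over: the drift is linear in $a$ and has growth $|y|^b$, and consistency sets come from comparing observations at different state magnitudes. What changes is that at a fixed state $y$ the drift now differs from $a\phi_b(y)$ by a factor in $[\underline h,\overline h]$. Taking logarithms of $|a h(y)|y|^b|$, this perturbs $\log|a|$ by at most the constant $\log(\overline h/\underline h)$ per observation.

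\textbf{Stabilization.} I would first try the direct reduction. Define the effective coefficient $\tilde a_t = a\,h(y_t)$. The observation $y_{t+1}-u_t = \tilde a_t\phi_b(y_t)+w_{t+1}$ then has exactly the form \eqref{eq:system}, except that the coefficient varies in time within the known set $a\cdot[\underline h,\overline h]$. The estimates of Section~\ref{sec:positive} should tolerate this. Those estimates compare consistent parameter pairs at two magnitudes $|y_s|\ll|y_t|$, and the resulting bound on the exponent error has the form $|b-b'|\lesssim(\text{const}+\log\text{-error})/\log(|y_t|/|y_s|)$. A bounded multiplicative uncertainty in the coefficient only enlarges the additive constant. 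Because $h$ is known, the cleaner route is to keep the true $a$, use the certainty-equivalent control $u_t=-\hat a_t h(y_t)\phi_{\hat b_t}(y_t)$, and redo each residual bound with $|h|\le\overline h$ and $h\ge\underline h$. Every inequality in the appendix estimates then holds with constants multiplied by $\overline h$ or $1/\underline h$. The exponent bookkeeping that produces the threshold $3\sqrt3/2$ depends only on the powers of $|y|$ and not on the constants, so it is unchanged. Measurability also carries over, since $h$ is continuous.

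\textbf{Impossibility and local classification.} The converse in Section~\ref{sec:negative} builds an escaping path $(y_t)$ and a convergent sequence of auxiliary parameters, with the limit $p^\star$ fixed and a bounded disturbance that makes the path a trajectory. I would rerun that construction with $G_p(y)=a h(y)\phi_b(y)$. The key point is that two parameter pairs must produce nearly equal drifts, up to $W$, at the observed states. The matching condition $a h(y)\phi_b(y)\approx a' h(y)\phi_{b'}(y)$ is equivalent to $a\phi_b(y)\approx a'\phi_{b'}(y)$ up to the factor $h(y)$. Dividing by $h(y)\ge\underline h$ turns a tolerance $W$ into the tolerance $W/\underline h$ for the unmultiplied system, and the proof of Theorem~\ref{thm:main}(ii) is valid for every $W>0$. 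So I would apply the construction with $W$ replaced by $\underline h W$. The feedback law for the $h$-system induces a feedback law $u_t/h(y_t)$ for a rescaled problem. Alternatively, and more safely, I would note that the construction only uses the drift through such comparisons and carry the factor $h(y_t)$ along. The growth $\log|y_t|\ge cz^t$ is insensitive to the bounded factor $h$. With (i) and (ii) in hand, the local classification follows verbatim from the proof of Corollary~\ref{cor:local-threshold}.

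\textbf{Main obstacle.} The hardest step is the converse. The exact rescaling trick fails because $y_{t+1}$ itself is not rescaled: dividing the dynamics by $h(y_t)$ breaks the form of \eqref{eq:system}. So the argument must genuinely revisit the path construction and check that every comparison is between drifts evaluated at the same state $y_t$, where $h(y_t)$ is a common factor. If some step instead compares drifts at different states, I would absorb the ratio $h(y)/h(y')\in[\underline h/\overline h,\overline h/\underline h]$ into the logarithmic slack. That slack is $o(\log|y|)$ along the escaping path and so does not affect the exponent threshold.
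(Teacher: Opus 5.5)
Your final plan is the paper's proof: insert $h$ into the feasible sets of Section~\ref{sec:positive} and use $u_t=-h(y_t)m_t(\mathcal F_t)$. Apply Lemmas~\ref{lem:two-scale} and~\ref{lem:one-anchor-upper-face} with $W/\underline h$ in place of $W$, and pay a factor $\overline h$ at the current state; only $C_0$, $\Lambda_0$, $\kappa$ and the first-exit bound $|a|>W/(\overline hS^{\overline b})$ change, and Lemma~\ref{lem:record-closure} is insensitive to these. Then rerun Proposition~\ref{prop:escape} carrying $h(y_n)$ as a common factor, which works because every comparison there is between two drifts at the same state.

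Three small corrections. First, your direct reduction to an unknown time-varying coefficient does not work as stated: a multiplicative drift error that cannot be learned is fatal, not an additive constant. Second, the substitution $W\mapsto\underline hW$ is the wrong constant; the paper instead shrinks the steps to $s_k=\frac{W(z-1)}{8z\overline h}e^{-\alpha_+-b_+L_k}$ so that $|w_{k+1}|\le W$, and uses $h\ge\underline h$ only in the growth lower bound. Third, your certainty-equivalent estimate must be taken from the maximal-exponent fiber $\mathcal F_t$, because the endpoint case $\overline b=3\sqrt3/2$ rests on Lemma~\ref{lem:one-anchor-upper-face}, which needs $b'\ge b^\star$.
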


Another extension allows the coefficient and exponent to depend nonlinearly
on an underlying parameter. Let $q\in\mathbb N$, let
$\Xi\subset\R^q$ be nonempty and compact, and let $g_0:\R\to\R$,
$A:\Xi\to\R$, and $B:\Xi\to(0,\infty)$ be known, with $A$ and $B$
continuous. Consider
\begin{equation}\label{eq:general-parameter-map}
 y_{t+1}=g_0(y_t)+A(\xi)\phi_{B(\xi)}(y_t)+u_t+w_{t+1},
 \qquad \xi\in\Xi,
\end{equation}
and define $H(\xi)\defeq (A(\xi),B(\xi))$.
Robust stabilization over a compact set $K\subset\Xi$ is understood as in
Definition~\ref{def:global-robust}, with $\xi\in K$ in place of
$p\in\mathcal P$.

\begin{corollary}\label{cor:parameter-map}
For system~\eqref{eq:general-parameter-map}, the following statements hold.
\begin{enumerate}[label=\textup{(\roman*)}]
\item If $\sup_{\xi\in\Xi}B(\xi)\le3\sqrt{3}/2$, then the system is
robustly stabilizable over $\Xi$.

\item Let $\xi_0$ be an interior point of $\Xi$. Suppose that $A$ and $B$ are
continuously differentiable near $\xi_0$, that
$\operatorname{rank}DH(\xi_0)=2$, and that
$B(\xi_0)\ge3\sqrt{3}/2$.
Then no compact neighborhood $K\subset\Xi$ with
$\xi_0\in\operatorname{int}K$ is robustly stabilizable.

\item If $B(\Xi)$ is finite and $\sup_{\xi\in\Xi}B(\xi)<4$, then the system
is robustly stabilizable over $\Xi$.
\end{enumerate}
In each of parts~(i) and~(iii), there is a feedback law such that, for
every $Y\ge0$, the state is bounded uniformly over $\xi\in\Xi$, all
$|y_0|\le Y$, and all disturbance sequences satisfying \eqref{eq:disturbance}.
\end{corollary}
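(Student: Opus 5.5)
The plan is to reduce all three parts to the results for the coefficient--exponent model, namely Theorem~\ref{thm:main} and Theorem~\ref{thm:finite-set}, applied to the image $H(K)\subset\R\times(0,\infty)$.

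Two preliminary reductions are needed. First, the known term $g_0(y_t)$ is cancelled by writing $u_t=-g_0(y_t)+v_t$. Since $g_0$ is known and the controller observes $y_t$, this causal substitution turns \eqref{eq:general-parameter-map} into \eqref{eq:system} with $(a,b)=H(\xi)$ and new input $v_t$. The transformed closed loop has exactly the same state trajectories, so no growth condition on $g_0$ is required. Second, the observed trajectory depends on $\xi$ only through $H(\xi)$. Hence a feedback law $v$ robustly stabilizes \eqref{eq:system} over $H(K)$ if and only if $u=-g_0+v$ robustly stabilizes \eqref{eq:general-parameter-map} over $K$. The uniform bound \eqref{eq:uniform-state-bound} transfers in the same way.

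For part (i), $H(\Xi)$ is compact because $\Xi$ is compact and $H$ is continuous. Every exponent in $H(\Xi)$ is at most $3\sqrt3/2$, so Theorem~\ref{thm:main}(i) gives a feedback law over $\mathcal P=H(\Xi)$ with the uniform bound, and the reduction transfers it back to $\Xi$. For part (iii), $H(\Xi)$ is compact with finite exponent set $\mathcal B=B(\Xi)$, whose maximum is below $4$. Let $\mathcal A$ be a nondegenerate compact interval containing $A(\Xi)$; if $A(\Xi)$ is a single point, enlarge it. Then $H(\Xi)\subset\mathcal A\times\mathcal B$. A law stabilizing the larger set $\mathcal A\times\mathcal B$, supplied by Theorem~\ref{thm:finite-set} with the uniform bound, also stabilizes the subset $H(\Xi)$.

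Part (ii) is the step that needs care.
\begin{enumerate}[label=\textup{(\alph*)}]
\item Since $\operatorname{rank}DH(\xi_0)=2$, the map $H$ is a $C^1$ submersion near $\xi_0$, so it is open there. Let $K$ be a compact neighborhood with $\xi_0\in\operatorname{int}K$. Then $H(\operatorname{int}K)$ contains an open neighborhood $U$ of $p_0=H(\xi_0)=(a_0,b_0)$, where $b_0\ge3\sqrt3/2$.
\item Choose a point $p_1=(a_1,b_1)\in U$ with $b_1>3\sqrt3/2$; this is possible because $U$ is open. Then choose a compact rectangle $Q$ with $p_1\in\operatorname{int}Q$ and $Q\subset U\subset H(K)$.
\item If $K$ were robustly stabilizable, the reduction would give a feedback law $v$ stabilizing \eqref{eq:system} over $H(K)$, hence over $Q$. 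Then $p_1$ would be locally robustly stabilizable, contradicting Theorem~\ref{thm:main}(ii). This argument also covers the equality case $b_0=3\sqrt3/2$, exactly as in Corollary~\ref{cor:local-threshold}.
\end{enumerate}

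The main obstacle is the openness claim in step (a). If $q=2$, it follows directly from the inverse function theorem. If $q>2$, I would use the constant rank or submersion theorem: choose coordinates so that $DH(\xi_0)$ restricted to a $2$-dimensional subspace is invertible, freeze the remaining coordinates at their values in $\xi_0$ (possible because $\xi_0$ is interior), and apply the inverse function theorem to the resulting $2$-variable map. This yields an open $U\ni p_0$ with $U\subset H(\operatorname{int}K)$.
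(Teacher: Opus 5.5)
Your proposal is correct and takes essentially the same route as the paper. You cancel $g_0$ with the causal input shift so that robust stabilization over $K$ becomes robust stabilization of \eqref{eq:system} over $H(K)$; you then apply Theorem~\ref{thm:main}(i) to the compact image $H(\Xi)$, Theorem~\ref{thm:finite-set} to an enclosing product $\mathcal A\times B(\Xi)$, and, for part~(ii), the inverse function theorem on a two-variable coordinate slice through $\xi_0$ to find an open subset of $H(K)$ that contains a rectangle around a point with exponent above $3\sqrt3/2$.
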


\begin{example}\label{ex:fluid-viscous-damper}
The force--velocity relation
\[
 F_d(v)=c_d|v|^\nu\operatorname{sgn}(v),
 \qquad c_d>0,\quad \nu>0,
\]
is used to model nonlinear fluid viscous dampers. The exponent describes the
nonlinear hydraulic behavior of the device
\cite{DeDomenicoRicciardi2018,MoslehiTabarDeDomenico2020}. Consider a body of
known mass $m>0$ driven through such a damper,
\[
 m\dot v=-F_d(v)+\bar u+\bar w.
\]
Here $\bar u$ is the control force and $\bar w$ is the disturbance force.
Fix a sampling period $\Delta>0$ and set
$u_t=(\Delta/m)\bar u_t$ and $w_{t+1}=(\Delta/m)\bar w_t$.
Forward Euler discretization yields the discrete-time approximation
\[
 v_{t+1}
 =v_t-\frac{\Delta c_d}{m}\phi_\nu(v_t)+u_t+w_{t+1}.
\]
This is \eqref{eq:general-parameter-map} with $g_0(v)=v$,
$\xi=(c_d,\nu)$, and
$H(\xi)=(-\Delta c_d/m,\nu)$.
On any compact parameter rectangle $\Xi\subset(0,\infty)^2$, the map
satisfies the continuity assumptions of Corollary~\ref{cor:parameter-map}.
Moreover,
\[
 DH(\xi)
 =\begin{pmatrix}
   -\Delta/m & 0\\
   0 & 1
  \end{pmatrix}.
\]
Corollary~\ref{cor:parameter-map}(i) gives
robust stabilization under \eqref{eq:disturbance} on every compact rectangle
$\Xi$ with $\sup_{(c_d,\nu)\in\Xi}\nu\le3\sqrt{3}/2$.
Since $\operatorname{rank}DH(\xi)=2$ for every $\xi\in\Xi$,
Corollary~\ref{cor:parameter-map}(ii) also shows that if
$(c_d,\nu)\in\operatorname{int}\Xi$ and $\nu\ge3\sqrt{3}/2$, no compact
neighborhood $K\subset\Xi$ of $(c_d,\nu)$ is robustly stabilizable.
\end{example}

\section{Proof of Theorem~\ref{thm:main}(i)}\label{sec:positive}

The feedback law applies zero input until the state first leaves
$[-2W,2W]$. If the state never leaves this interval, the trajectory is
bounded. Otherwise, the transition producing the first exit implies that all
parameters consistent with the observations have coefficients of the same
nonzero sign and magnitudes bounded away from zero. It is therefore enough to
prove a state bound that is uniform over compact parameter sets whose
coefficient magnitudes are bounded away from zero.

\subsection{Feedback law and the first exit}

Fix the known compact parameter set $\mathcal P$. At time $t$, define the
parameters consistent with all observations available by that time as
\begin{equation}\label{eq:feasible-set}
 \mathcal P_t
 \defeq \left\{p\in\mathcal P:
 \left|y_{k+1}-u_k-G_p(y_k)\right|\le W,
 \ 0\le k<t\right\}.
\end{equation}
On a trajectory generated by $p^\star\in\mathcal P$ under
\eqref{eq:disturbance},
$y_{k+1}-u_k-G_{p^\star}(y_k)=w_{k+1}$, so
$p^\star\in\mathcal P_t$ for every $t$. Thus $\mathcal P_t$ is nonempty and
compact, and
$\mathcal P_{t+1}\subseteq\mathcal P_t$. Define the parameters in
$\mathcal P_t$ having the largest exponent by
\begin{equation}\label{eq:maximal-exponent-fiber}
 \mathcal F_t
 \defeq \operatorname*{arg\,max}_{(a,b)\in\mathcal P_t} b.
\end{equation}
For every nonempty compact parameter set $\mathcal S$, define
$m_t(\mathcal S)$ as the arithmetic mean of the smallest and largest values of
$G_p(y_t)$ over $p\in\mathcal S$:
\[
 m_t(\mathcal S)\defeq \frac12\left(
 \min_{p\in\mathcal S}G_p(y_t)+\max_{p\in\mathcal S}G_p(y_t)
 \right).
\]
Define the first-exit time
$t_{\rm a}\defeq \inf\{s\in\mathbb N:s\ge1,\ |y_s|>2W\}$,
where the infimum of the empty set is infinity. Whether $t\ge t_{\rm a}$ is
determined by the states observed by time $t$. On histories for which
$\mathcal P_t$ is nonempty, the controller is
\begin{equation}\label{eq:fiber-midpoint-controller}
 u_t=
 \begin{cases}
  0,&t<t_{\rm a},\\
  -m_t(\mathcal F_t),&t\ge t_{\rm a}.
 \end{cases}.
\end{equation}
On histories for which $\mathcal P_t$ is empty, set $u_t=0$.
At time $t$, every quantity in
\eqref{eq:fiber-midpoint-controller} is determined by
$\mathcal P$, $W$, and $(y_0,\ldots,y_t)$.

The zero-input transition at $t_{\rm a}$ gives the following reduction.

\begin{lemma}
\label{lem:first-exit-reduction}
Let $\mathcal P\subset\R\times(0,\infty)$ be nonempty and compact, and set
\[
 \overline a=\max_{(a,b)\in\mathcal P}|a|,
 \qquad
 b_-=\min_{(a,b)\in\mathcal P}b,
 \qquad
 \overline b=\max_{(a,b)\in\mathcal P}b.
\]
Fix $Y\ge0$, $p^\star\in\mathcal P$, $|y_0|\le Y$, and a disturbance sequence
satisfying \eqref{eq:disturbance}. Initialize \eqref{eq:feasible-set} with
$\mathcal P$, use zero input for $t<t_{\rm a}$, and suppose that
$t_{\rm a}<\infty$. With
\[
 S=\max\{1,Y,2W\},
 \qquad
 y_{\rm a}=y_{t_{\rm a}},
\]
the set $\mathcal P_{t_{\rm a}}$ is nonempty and compact, all its coefficients
have the same nonzero sign, and
\begin{equation}\label{eq:first-exit-reduction}
 \mathcal P_{t_{\rm a}}\subset
 \left\{(a,b):
 \frac{W}{S^{\overline b}}<|a|\le\overline a,
 \quad b_-\le b\le\overline b
 \right\},
 \qquad
 |y_{\rm a}|\le\overline aS^{\overline b}+W.
\end{equation}
\end{lemma}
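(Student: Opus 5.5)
The plan is to analyze the single zero-input transition from time $t_{\rm a}-1$ to $t_{\rm a}$. Write $s=t_{\rm a}-1\ge0$. Since $u_k=0$ for $k<t_{\rm a}$, we have $y_{s+1}=G_{p^\star}(y_s)+w_{s+1}$. By minimality of $t_{\rm a}$, either $s=0$ and $|y_s|=|y_0|\le Y$, or $s\ge1$ and $|y_s|\le 2W$. In both cases $|y_s|\le S$.

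Nonemptiness and compactness of $\mathcal P_{t_{\rm a}}$ follow as observed after \eqref{eq:feasible-set}. The point $p^\star$ belongs to every $\mathcal P_t$, and each constraint $|y_{k+1}-u_k-G_p(y_k)|\le W$ is closed in $p$, because $(a,b)\mapsto a\phi_b(y)$ is continuous on $\R\times(0,\infty)$. Here $\phi_b(0)=0$ for $b>0$, so the case $y=0$ is also continuous. The bound on $|y_{\rm a}|$ comes directly from the transition:
\[
|y_{\rm a}|\le |a^\star|\,|y_s|^{b^\star}+W\le \overline a\,S^{\overline b}+W,
\]
using $|y_s|\le S$, $S\ge1$ and $b^\star\le\overline b$. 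The containment $b_-\le b\le\overline b$ and $|a|\le\overline a$ is inherited from $\mathcal P_{t_{\rm a}}\subset\mathcal P$.

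The substantive step is the lower bound on $|a|$ together with the common sign. Take any $p=(a,b)\in\mathcal P_{t_{\rm a}}$. The constraint at $k=s$ with $u_s=0$ gives $|y_{s+1}-a\phi_b(y_s)|\le W$. Since $|y_{s+1}|>2W$, this yields
\[
|a|\,|y_s|^b\ge |y_{s+1}|-W>W .
\]
In particular $a\ne0$ and $y_s\ne0$. Moreover $|y_s|^b\le S^{\overline b}$, because $|y_s|\le S$, $S\ge1$ and $b\le\overline b$; hence $|a|>W/S^{\overline b}$.

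For the sign, the same inequality shows that $a\phi_b(y_s)$ lies within $W$ of $y_{s+1}$, where $|y_{s+1}|>2W$. Therefore $\operatorname{sgn}(a\phi_b(y_s))=\operatorname{sgn}(y_{s+1})$, since an interval of radius $W$ around a point of modulus greater than $2W$ excludes zero and values of the opposite sign. As $\operatorname{sgn}\phi_b(y_s)=\operatorname{sgn}(y_s)\ne0$ for every $b$, we get $\operatorname{sgn}(a)=\operatorname{sgn}(y_{s+1})\operatorname{sgn}(y_s)$, which does not depend on $p$.

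I expect no serious obstacle. The only points needing care are the case split $s=0$ versus $s\ge1$ in bounding $|y_s|$, and the role of $S\ge1$: it is what allows $|y_s|^b\le S^{\overline b}$ uniformly in $b\in[b_-,\overline b]$ even when $|y_s|<1$.
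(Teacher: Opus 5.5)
Your proposal is correct and follows the paper's proof essentially step for step. You analyze the zero-input transition at $t_{\rm a}-1$, use $|y_{t_{\rm a}}|>2W$ to keep $a\phi_b(y_{t_{\rm a}-1})$ away from zero (which gives the common sign and $|a|\,|y_{t_{\rm a}-1}|^b>W$), bound $|y_{t_{\rm a}-1}|\le S$ by minimality of $t_{\rm a}$, and read off the remaining bounds from $\mathcal P_{t_{\rm a}}\subset\mathcal P$ and the true transition, exactly as the paper does; your explicit sign formula $\operatorname{sgn}(a)=\operatorname{sgn}(y_{t_{\rm a}})\operatorname{sgn}(y_{t_{\rm a}-1})$ and the case split $t_{\rm a}-1=0$ versus $t_{\rm a}-1\ge1$ only spell out what the paper leaves implicit.
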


\begin{proof}
The true parameter belongs to $\mathcal P_{t_{\rm a}}$, which is compact
because it is obtained from $\mathcal P$ by finitely many closed feasibility
constraints. Since $u_{t_{\rm a}-1}=0$, every
$(a,b)\in\mathcal P_{t_{\rm a}}$
satisfies
\[
 a\phi_b(y_{t_{\rm a}-1})
 \in[y_{\rm a}-W,y_{\rm a}+W].
\]
The interval on the right does not contain zero because $|y_{\rm a}|>2W$.
Hence $y_{t_{\rm a}-1}\ne0$, all feasible coefficients have the same nonzero
sign, and $|a|\,|y_{t_{\rm a}-1}|^b>W$.
The definition of $t_{\rm a}$ and the bound on the initial state give
$|y_{t_{\rm a}-1}|\le S$. Thus $|a|>W/S^{\overline b}$, while
$|a|\le\overline a$ and $b_-\le b\le\overline b$ follow from
$\mathcal P_{t_{\rm a}}\subset\mathcal P$. For the
true parameter,
$|y_{\rm a}|\le |a^\star||y_{t_{\rm a}-1}|^{b^\star}+W
\le\overline aS^{\overline b}+W$, which completes the proof of
\eqref{eq:first-exit-reduction}.
\end{proof}

\subsection{Uniform state bound}

Lemma~\ref{lem:first-exit-reduction} reduces the proof to compact parameter
sets whose coefficient magnitudes are bounded away from zero. Since the
resulting set $\mathcal P_{t_{\rm a}}$ depends on the observed history, we
establish a common state bound for all compact sets contained in fixed
coefficient and exponent ranges.

\begin{proposition}
\label{prop:away-zero-core}
Fix $W>0$, $0<\underline a\le\overline a<\infty$, and
$0<b_-\le\overline b\le3\sqrt{3}/2$.
For every $Y\ge0$, there is a constant $C<\infty$, depending only on
$Y,\underline a,\overline a,b_-,\overline b$, and $W$, with the following
property. For every nonempty compact set
\[
 \mathcal P\subset
 \{(a,b):\underline a\le|a|\le\overline a,\quad
 b_-\le b\le\overline b\},
\]
the rule $u_t=-m_t(\mathcal F_t)$, with $\mathcal P_t$ initialized by
$\mathcal P$, satisfies
\[
 \sup_{\substack{p^\star\in\mathcal P,\ |y_0|\le Y,\ t\ge0\\
                  (w_k)_{k\ge1}:\ \sup_{k\ge1}|w_k|\le W}}
 |y_t|\le C.
\]
\end{proposition}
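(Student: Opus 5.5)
The plan is to argue by contradiction through a log-magnitude recursion, in the spirit of the known-exponent argument of \cite{LiXie2006}. The recursion is adapted so that two parameters must be pinned down, which takes two past observations at different magnitudes. Put $L_t\defeq\log\max\{|y_t|,e\}$. The closed-loop error is
\[
 y_{t+1}=G_{p^\star}(y_t)-m_t(\mathcal F_t)+w_{t+1}.
\]
So $|y_{t+1}|\le W+\operatorname{diam}\{G_p(y_t):p\in\mathcal P_t\}$, up to the extra term coming from restricting to the fiber $\mathcal F_t$; that term is handled in the third step. The first step is a \emph{two-point interpolation estimate}. Take $p=(a,b)$ and $p'=(a',b')$ in $\mathcal P_t$, and let $k<j<t$ be two earlier times. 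The feasibility constraints \eqref{eq:feasible-set}, combined with the controller's cancellation at those times, give
\[
 |G_p(y_k)-G_{p'}(y_k)|\le 2W+|y_{k+1}|
\]
and the analogous bound at time $j$. Write $\log|a|-\log|a'|+(b-b')\log|y_k|$ and its time-$j$ counterpart, and use $|a|\ge\underline a$ so that relative and absolute errors agree up to constants. Solving this $2\times2$ linear system bounds $|b-b'|$ by roughly the ratio of the two relative errors divided by $\log|y_j|-\log|y_k|$, and then bounds $|\log|a|-\log|a'||$. Extrapolating to magnitude $|y_t|$ gives
\[
 \log\operatorname{diam}\{G_p(y_t):p\in\mathcal P_t\}\lesssim \overline b L_t-\overline b L_{j}+(\text{relative error terms})+O(\log L_t).
\]

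The second step turns this into a recursion. Choose $j=t-1$ and $k=t-2$. The relative error at time $k$ is about $e^{L_{k+1}-\overline bL_k}$. Taking the worst case consistent with the previous steps, the estimate reduces to
\[
 L_{t+1}\le \overline b L_t-\overline b L_{t-2}+O(\log L_t).
\]
The characteristic polynomial of this recursion is $z^3-\overline b z^2+\overline b$. For $z>0$ it attains its minimum at $z=2\overline b/3$, where it equals $\overline b-4\overline b^3/27$. This value is nonnegative exactly when $\overline b\le3\sqrt3/2$. In that case the polynomial has no real root exceeding $1$, so the recursion admits no exponentially growing solution. A comparison argument then shows that $L_t$ cannot escape, as follows. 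Suppose $L_t$ exceeds a large threshold $C$, determined by $Y$, the constants, and the first few steps (bounded through Lemma~\ref{lem:first-exit-reduction}-type estimates). Then the sequence would have to follow the growth pattern of the recursion, and that pattern is impossible. I would also handle the case where consecutive magnitudes are close, so that $\log|y_j|-\log|y_k|$ is small and the interpolation degenerates. I would choose $k$ as the last earlier time whose magnitude is a definite factor below $L_j$, and argue that the intermediate small-jump steps do not help an escape.

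The main obstacle is the endpoint $\overline b=3\sqrt3/2$. There the polynomial has a double root at $z=\sqrt3$, so the $O(\log L_t)$ slack could produce slow escape. This is where the fiber $\mathcal F_t$ enters. Because the controller cancels the midpoint over parameters with the \emph{largest} feasible exponent, the prediction error has a sign structure: parameters with smaller exponents produce drift growing strictly slower, by a factor $|y_t|^{-(\overline b_t-b)}$. If they are the ones generating a large error, that error forces $\overline b_t-b\gtrsim 1/L$, which shrinks the feasible exponent range. I would therefore track an additional potential, namely the feasible exponent width, and show that it decays fast enough to absorb the logarithmic losses. Making this bookkeeping uniform over all compact $\mathcal P$ in the given box, so that $C$ depends only on $Y,\underline a,\overline a,b_-,\overline b,W$, is where I expect most of the technical work. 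Uniformity itself should follow because every estimate uses only these constants.
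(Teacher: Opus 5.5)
Your outline for $\overline b<3\sqrt3/2$ follows the paper's: record times, interpolation of the affine log-ratio between two anchors (Lemma~\ref{lem:two-scale}), and the cubic $z^3-\overline b z^2+\overline b$ (Lemma~\ref{lem:critical-constant}). One step is misstated, though. Two parameters in $\mathcal P_t$ both lie within $W$ of $y_{k+1}-u_k$, so $|G_p(y_k)-G_{p'}(y_k)|\le 2W$, with no $|y_{k+1}|$ term. With your weaker bound the anchor errors carry the factors $|y_{t-1}|$ and $|y_t|$, and the recursion you state does not follow. With the correct bound, and with the error measured against $p^\star$ as in $D_t$, you get $L_{n+1}\le b^\star(L_n-L_{n-2})+\log(1+L_n)+C_0$. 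The coefficient is the \emph{true} exponent $b^\star$, not $\overline b$, and this matters below.

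The genuine gap is the endpoint $\overline b=3\sqrt3/2$, which the statement includes. There the obstacle is not the $O(\log L_t)$ slack. Your claim that the cubic then has no root exceeding $1$ is false, since $\sqrt3$ is a double root. Indeed $L_n=c\,3^{n/2}$ satisfies $L_{n+1}=\overline b(L_n-L_{n-2})$ exactly, together with the coarse bound and the record increments. So no two-anchor estimate, however sharp, excludes escape. Tracking the data-driven exponent width only repeats what that estimate already uses, unless it exploits the face $b\le\overline b$, which your sketch does not.

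The missing idea is a dichotomy on $b^\star$:
\begin{itemize}
\item If $b^\star$ stays away from $\overline b$, the recursion with coefficient $b^\star<3\sqrt3/2$ is strictly subcritical.
\item If $\overline b-b^\star\le\kappa e^{-\overline bL_n}/(1+L_n)$, then every $p'\in\mathcal F_t$ satisfies $b^\star\le b'\le\overline b$, because $p^\star\in\mathcal P_t$. A single past anchor then controls the error (Lemma~\ref{lem:one-anchor-upper-face}). This yields the known-exponent recursion $L_{n+1}\le\overline b(L_n-L_{n-1})+C_0$, whose critical value is $4$.
\end{itemize}
This second case is the real role of the fiber $\mathcal F_t$.

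Your remark that uniformity is automatic also fails at the endpoint. The subcritical margin degenerates as $b^\star\to\overline b$, and the closeness threshold depends on $L_n$, so neither regime alone gives a constant uniform in $b^\star$. The paper glues the two regimes by a compactness argument over sequences of trajectories with $b_m\to b_\infty$ (Lemma~\ref{lem:record-closure}). In the limit one obtains either $r\le b_\infty(1-r^{-2})$ with $b_\infty<3\sqrt3/2$, or $r\le\overline b(1-r^{-1})$, and both are impossible.
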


\begin{proof}
Let $p^\star=(a^\star,b^\star)\in\mathcal P$, $|y_0|\le Y$, and let
$(w_t)_{t\ge1}$ satisfy \eqref{eq:disturbance}. With $\mathcal P_t$ and
$\mathcal F_t$ defined by \eqref{eq:feasible-set} and
\eqref{eq:maximal-exponent-fiber}, put
\[
 D_t\defeq
 \max_{p\in\mathcal F_t}|G_p(y_t)-G_{p^\star}(y_t)|.
\]
The maximum exists because $\mathcal F_t$ is nonempty and compact.
Every value $G_p(y_t)$ with $p\in\mathcal F_t$ lies within $D_t$ of
$G_{p^\star}(y_t)$. Hence
$|m_t(\mathcal F_t)-G_{p^\star}(y_t)|\le D_t$. Since
$u_t=-m_t(\mathcal F_t)$,
\eqref{eq:system} and \eqref{eq:disturbance} give
\begin{equation}\label{eq:current-fiber-state}
 |y_{t+1}|\le W+D_t.
\end{equation}

Set $\tau_0=0$ and $L_0=\log\max\{1,|y_0|\}$. For each $n\ge0$ with
$\tau_n<\infty$, define
\begin{equation}\label{eq:record-times}
 \tau_{n+1}=\inf\{t>\tau_n:
 \max\{1,|y_t|\}>2e^{L_n}\}.
\end{equation}
If $\tau_{n+1}<\infty$, set
$L_{n+1}=\log\max\{1,|y_{\tau_{n+1}}|\}$. The definition gives
\begin{align}
 \max\{1,|y_t|\}
 &\le2e^{L_n},\qquad \tau_n\le t<\tau_{n+1},
 \label{eq:between-records}\\
 L_{n+1}
 &>L_n+\log 2,\qquad \tau_{n+1}<\infty.
 \label{eq:record-increment}
\end{align}
If $\tau_{n+1}=\infty$, then \eqref{eq:between-records} gives
$\sup_{t\ge\tau_n}|y_t|\le2e^{L_n}$. Fix $n$ with
$\tau_{n+1}<\infty$.

For $\tau_n\le t<\tau_{n+1}$, both $|G_{a,b}(y_t)|$, uniformly over
$(a,b)\in\mathcal P$, and $|m_t(\mathcal F_t)|$ are at most
$\overline a2^{\overline b}e^{\overline bL_n}$. Use the following constant in
all three record estimates:
\begin{equation}\label{eq:record-C0}
 C_0=\log\left(
 2+W+2^{\overline b+1}(\overline a+W)
 +\frac{16eW\,2^{\overline b}}{\log2}
 \right).
\end{equation}
Applying the state equation at $t=\tau_{n+1}-1$ gives
\[
 e^{L_{n+1}}
 \le 1+W+2^{\overline b+1}\overline a
             e^{\overline bL_n}
 \le e^{C_0+\overline bL_n}.
\]
Therefore
\begin{equation}\label{eq:top-coarse-record}
 L_{n+1}\le\overline bL_n+C_0.
\end{equation}
In particular,
$L_n\le\overline b^2L_{n-2}+(\overline b+1)C_0$ for $n\ge2$.

Let $R_*=\max\{1,2\overline a+W\}$. When $|y_t|\le1$, the plant drift and
$m_t(\mathcal F_t)$ both have magnitude at most $\overline a$. Hence
\begin{equation}\label{eq:top-small-state}
 \max\{1,|y_{t+1}|\}\le R_*
 \qquad\text{whenever } |y_t|\le1.
\end{equation}

We first derive the record bound that uses two previous record states. Since
$b_->0$, choose
$\Lambda_0\ge1$ so that, for every $x\ge\Lambda_0$,
\begin{equation}\label{eq:top-record-threshold}
 \underline a e^{b_-x}\ge4W,
 \quad 4e^x>R_*,\quad
 \frac{16W}{\underline a\log2}
 \bigl[1+\overline b^2x+(\overline b+1)C_0\bigr]e^{-b_-x}\le1.
\end{equation}
The threshold $\Lambda_0$ depends only on
$\underline a,\overline a,b_-,\overline b$, and $W$. Let $n\ge2$ satisfy
$\tau_{n+1}<\infty$ and $L_{n-2}\ge\Lambda_0$, and put
$t=\tau_{n+1}-1$. Since
$\tau_{n-2}<\tau_{n-1}<t$, the observations generated from
$y_{\tau_{n-2}}$ and $y_{\tau_{n-1}}$ occur among the constraints defining
$\mathcal P_t$. For $i=n-2,n-1$ and $p'\in\mathcal F_t$, feasibility and
the first inequality in \eqref{eq:top-record-threshold} give
\[
 |G_{p'}(y_{\tau_i})-G_{p^\star}(y_{\tau_i})|
 \le2W,
 \qquad
 |G_{p^\star}(y_{\tau_i})|
 \ge\underline a e^{b_-L_{n-2}}\ge4W.
\]
Hence every $p'=(a',b')\in\mathcal F_t$ satisfies $a'a^\star>0$.
Eq.~\eqref{eq:record-increment} and the second inequality in
\eqref{eq:top-record-threshold} give
$e^{L_{n+1}}>8e^{L_{n-2}}>R_*$, so
\eqref{eq:top-small-state} implies $|y_t|>1$.

Fix $p'=(a',b')\in\mathcal F_t$ and apply
Lemma~\ref{lem:two-scale} to the observed states
$y_{\tau_{n-2}}$, $y_{\tau_{n-1}}$, and the current state $y_t$.
The relations
$0<L_{n-2}<L_{n-1}<L_n$, $0<\log|y_t|\le L_n+\log2$, and
$L_{n-1}-L_{n-2}>\log2$ give
\[
 \frac{\bigl|L_{n-1}-\log|y_t|\bigr|
       +\bigl|\log|y_t|-L_{n-2}\bigr|}
 {L_{n-1}-L_{n-2}}
 \le \frac{2(L_n+\log2)}{\log2}
 \le \frac{4(1+L_n)}{\log2}.
\]
Consequently, \eqref{eq:two-state-log-ratio} gives
\[
 \left|\log\frac{|G_{p'}(y_t)|}{|G_{p^\star}(y_t)|}\right|
 \le \frac{16W}{|a^\star|\log2}(1+L_n)
 e^{-b^\star L_{n-2}},
 \qquad p'\in\mathcal F_t.
\]

Using \eqref{eq:top-coarse-record},
$|a^\star|\ge\underline a$, $b_-\le b^\star\le\overline b$, and the third
inequality in \eqref{eq:top-record-threshold}, the right-hand side satisfies
\[
 \frac{16W}{\underline a\log2}
 \bigl[1+\overline b^2L_{n-2}+(\overline b+1)C_0\bigr]
 e^{-b_-L_{n-2}}\le1.
\]
By \eqref{eq:between-records}, $|y_t|\le2e^{L_n}$. Hence
$|G_{p^\star}(y_t)|=|a^\star||y_t|^{b^\star}
\le |a^\star|2^{\overline b}e^{b^\star L_n}$.
Eq.~\eqref{eq:two-state-drift-error}, followed by
maximization over
$p'\in\mathcal F_t$, now gives
\[
 D_t\le \frac{16eW\,2^{\overline b}}{\log2}(1+L_n)
 \exp\{b^\star(L_n-L_{n-2})\}.
\]

Substitution in \eqref{eq:current-fiber-state} gives
\[
 e^{L_{n+1}}
 \le 1+W+D_t
 \le e^{C_0}(1+L_n)
       \exp\{b^\star(L_n-L_{n-2})\}.
\]
Taking logarithms gives
\begin{equation}\label{eq:top-two-scale-record}
 L_{n+1}
 \le b^\star(L_n-L_{n-2})+\log(1+L_n)+C_0,
\end{equation}
whenever $n\ge2$, $\tau_{n+1}<\infty$, and $L_{n-2}\ge\Lambda_0$.

We next derive the record bound used when $b^\star$ is close to
$\overline b$. Set
\begin{equation}\label{eq:top-upper-face-kappa}
 \kappa=\min\left\{1,\frac{1}{e\overline a\,2^{\overline b}}\right\}.
\end{equation}
Suppose that $n\ge1$, $\tau_{n+1}<\infty$,
$L_{n-1}\ge\Lambda_0$, and
\[
 \overline b-b^\star
 \le\kappa\frac{e^{-\overline bL_n}}{1+L_n}.
\]
At time
$t=\tau_{n+1}-1$, the transition from $y_{\tau_{n-1}}$ has been observed.
For every $p'\in\mathcal F_t$, feasibility and the first inequality in
\eqref{eq:top-record-threshold} give
\[
 |G_{p'}(y_{\tau_{n-1}})-G_{p^\star}(y_{\tau_{n-1}})|
 \le2W<|G_{p^\star}(y_{\tau_{n-1}})|,
\]
so $p'$ has the coefficient sign of $p^\star$. Because
$p^\star\in\mathcal P_t$ and $\mathcal F_t$ maximizes the exponent over
$\mathcal P_t$, every $p'=(a',b')\in\mathcal F_t$ also satisfies
$b^\star\le b'\le\overline b$. Moreover,
Eq.~\eqref{eq:record-increment} and the second inequality in
\eqref{eq:top-record-threshold} give
$e^{L_{n+1}}>4e^{L_{n-1}}>R_*$. Hence \eqref{eq:top-small-state} implies
$|y_t|>1$. Thus
\[
 1<|y_{\tau_{n-1}}|\le e^{L_n},
 \qquad
 1<|y_t|\le2e^{L_n}.
\]
Since $L_n+\log2\le1+L_n$ and
$\overline b-b^\star\le\kappa e^{-\overline bL_n}/(1+L_n)$,
\eqref{eq:top-upper-face-kappa} gives
\[
 (\overline b-b^\star)(L_n+\log2)
 \le \kappa e^{-\overline bL_n}\le\kappa\le1 
\]
and
\[
 \overline a e^{\overline b(L_n+\log2)}
 (\overline b-b^\star)(L_n+\log2)
 e^{(\overline b-b^\star)(L_n+\log2)}
 \le e\overline a\,2^{\overline b}\kappa\le1.
\]
Lemma~\ref{lem:one-anchor-upper-face}, applied with
$\Lambda=L_n+\log2$, therefore gives
\[
 D_t\le 2^{\overline b+1}W
 \exp\{\overline b(L_n-L_{n-1})\}+1.
\]
Consequently,
\[
 e^{L_{n+1}}
 \le1+W+D_t
 \le e^{C_0+\overline b(L_n-L_{n-1})}.
\]
Thus
\begin{equation}\label{eq:top-one-scale-record}
 L_{n+1}\le\overline b(L_n-L_{n-1})+C_0.
\end{equation}

For every choice of $\mathcal P$, $p^\star$, $y_0$, and
$(w_t)_{t\ge1}$ allowed by the proposition, and every $N$ such that
$\tau_N<\infty$, the sequence $(L_0,\ldots,L_N)$ satisfies the hypotheses of
Lemma~\ref{lem:record-closure}. Take $b=b^\star$,
$L_{\max}=\log(\max\{1,Y\})$, $C_0$ from \eqref{eq:record-C0},
$\Lambda_0$ from \eqref{eq:top-record-threshold}, and $\kappa$ from
\eqref{eq:top-upper-face-kappa}. The parameter bounds in the proposition and
the definition of $L_0$ give $0<b^\star\le\overline b$ and
$0\le L_0\le L_{\max}$. The required recurrence bounds are
\eqref{eq:record-increment}, \eqref{eq:top-coarse-record},
\eqref{eq:top-two-scale-record}, and \eqref{eq:top-one-scale-record}.
Let
$M=M(\log(\max\{1,Y\}),C_0,\Lambda_0,\overline b,\kappa)$ be the bound
given by Lemma~\ref{lem:record-closure}. Then $L_n\le M$ at every finite
record, and \eqref{eq:between-records} gives
$\max\{1,|y_t|\}\le2e^M$ for every $t\ge0$. This is the required uniform
state bound.
\end{proof}

\begin{proof}[Proof of Theorem~\ref{thm:main}(i)]
Fix $Y\ge0$, $p^\star\in\mathcal P$, $|y_0|\le Y$, and
a disturbance sequence satisfying \eqref{eq:disturbance}.
If $t_{\rm a}=\infty$, then $|y_t|\le2W$ for every $t\ge1$.

Suppose that $t_{\rm a}<\infty$ and use the notation of
Lemma~\ref{lem:first-exit-reduction}. For every integer $s\ge0$, reinitializing
the feasible set at time $t_{\rm a}$ with $\mathcal P_{t_{\rm a}}$ gives
\[
 \left\{p\in\mathcal P_{t_{\rm a}}:
 |y_{k+1}-u_k-G_p(y_k)|\le W,
 \ t_{\rm a}\le k<t_{\rm a}+s\right\}
 =\mathcal P_{t_{\rm a}+s}.
\]
The restarted controller therefore agrees with
\eqref{eq:fiber-midpoint-controller} at every subsequent time. Apply
Proposition~\ref{prop:away-zero-core} from time $t_{\rm a}$ with coefficient
bounds $W/S^{\overline b}$ and $\overline a$, exponent bounds $b_-$ and
$\overline b$, and radius $\overline aS^{\overline b}+W$ for the state at
time $t_{\rm a}$. These quantities depend only on
$\mathcal P$, $W$, and $Y$. The proposition therefore bounds the state
uniformly for $t\ge t_{\rm a}$. Together with $|y_0|\le Y$ and
$|y_t|\le2W$ for
$1\le t<t_{\rm a}$, this proves the theorem.
\end{proof}

\section{Proof of Theorem~\ref{thm:main}(ii)}\label{sec:negative}

The following proposition constructs the escaping trajectory used to prove
Theorem~\ref{thm:main}(ii).

\begin{proposition}\label{prop:escape}
Let $\mathcal A\subset\R$ be a nondegenerate compact interval with
$0\notin\mathcal A$, and let $1<b_-<b_+$. Set
\[
 Q=\mathcal A\times[b_-,b_+],
\]
and suppose that there is a $z>1$ such that
\begin{equation}\label{eq:strict-escape}
 b_- -\frac{b_+}{z^2}>z.
\end{equation}
Then there exists $L_\star\ge1$, depending only on $Q,W$, and $z$, such
that, for every feedback law and every $L_0\ge L_\star$, there exist
$p^\star\in Q$ and a disturbance sequence satisfying
\eqref{eq:disturbance} for which $y_0=e^{L_0}$ and
\[
 \log|y_t|\ge L_0z^t,
 \qquad t\ge0.
\]
In particular, no feedback law robustly stabilizes
\eqref{eq:system} over $Q$.
\end{proposition}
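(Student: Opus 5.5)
The plan is an adversarial construction driven by one observation. Fix a nominal parameter $p^{(0)}$ in the interior of $Q$. At a fixed state $y$ with $\log|y|=L$, two parameters give nearly equal drifts when $\log|a|+bL$ is nearly equal for both. So a perturbation of $(\log|a|,b)$ along the direction $(-L,1)$ is almost invisible at scale $L$, yet it changes the drift at larger scales $L'>L$ by a factor of about $e^{\delta b\,(L'-L)}$.

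I will build the states $y_t$ recursively, with $\log|y_t|\ge L_0z^t$. Alongside them I build parameters $p^{(t)}=(a_t,b_t)\in Q$. Each new parameter is obtained from the previous one by an invisible perturbation at the scale $L_{t-1}=\log|y_{t-1}|$. The adversary picks the sign of that perturbation after seeing the controller's input $u_t$, which the causal feedback law determines from $(y_0,\dots,y_t)$.

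\textbf{Step 1 (one step of the construction).} Suppose $y_0,\dots,y_t$ and $p^{(t)}$ are given, and $y_{k+1}=G_{p^{(t)}}(y_k)+u_k+w_{k+1}$ holds for $k<t$ with some residuals $w_{k+1}$. Consider the two candidates $p_\pm$ defined by
\[
 b_\pm=b_t\pm\delta_t,\qquad \log|a_\pm|=\log|a_t|\mp\delta_tL_{t-1},
\]
with $\delta_t\approx c\,W e^{-b_+L_{t-1}}$ for a small constant $c$. At the scale $L_{t-1}$ the drifts of $p_\pm$ differ from that of $p^{(t)}$ by at most about $cW$. At an earlier scale $L_k$ the difference is of order $\delta_t(L_{t-1}-L_k)e^{b_+L_k}$. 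Because the $L_k$ grow geometrically, this is summable in $t$ and can be made as small as desired once $L_0\ge L_\star$.

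At the current scale $L_t$, the drifts $G_{p_\pm}(y_t)$ differ by at least about
\[
 \delta_t(L_t-L_{t-1})e^{b_-L_t}\gtrsim W e^{b_-L_t-b_+L_{t-1}}.
\]
Whatever $u_t$ the controller chooses, one of the two values $|G_{p_\pm}(y_t)+u_t|$ is at least half of this gap. I set $p^{(t+1)}$ to be that candidate and choose $w_{t+1}=0$, so that $y_{t+1}=G_{p^{(t+1)}}(y_t)+u_t$. This gives
\[
 L_{t+1}\ge b_-L_t-b_+L_{t-1}-C.
\]
Condition \eqref{eq:strict-escape} is exactly what makes the ansatz $L_t\ge L_0z^t$ propagate through this recursion, since $z^{t+1}<b_-z^t-b_+z^{t-1}$ with slack $\eta z^{t-1}$ for some $\eta>0$. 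That slack absorbs $C$ and the logarithmic losses once $L_0\ge L_\star$.

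\textbf{Step 2 (passage to the limit).} The total movement of the parameters satisfies $\sum_t\delta_t(1+L_{t-1})<\infty$ and is arbitrarily small for $L_0$ large. Hence $p^{(t)}\to p^\star\in Q$, provided the starting point $p^{(0)}$ is taken far enough inside $Q$. Using $0\notin\mathcal A$, the coefficient sign stays fixed, and $\log|a|$ remains a legitimate coordinate.

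Set $w_{k+1}=y_{k+1}-u_k-G_{p^\star}(y_k)$. This is the residual of the fixed plant $p^\star$ on the constructed path. It is bounded by $\sum_{s>k}|G_{p^{(s+1)}}(y_k)-G_{p^{(s)}}(y_k)|$, and the choice of $c$ and $L_\star$ makes this sum at most $W$. Then $(y_t)$ is the actual closed-loop trajectory of the feedback law for the plant $p^\star$, the initial state $y_0=e^{L_0}$, and this admissible disturbance sequence. The inductive construction is consistent because each $u_k$ depends only on $y_0,\dots,y_k$, which are fixed before $p^{(k+1)}$ is chosen.

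\textbf{Main obstacle.} The delicate step is the bookkeeping of the residual sum in Step 2. I must show uniformly that
\[
 \sum_{s>k}\delta_s\,(L_{s-1}-L_k)\,e^{b_+L_k}\le W,
\]
even though the exponents $b_s$ drift within $[b_-,b_+]$. I also have to verify that the cruder first two steps, where no scale $L_{t-1}$ exists yet, can be handled by starting with $y_{-1}$ implicitly at scale $0$. I would first try the explicit choice
\[
 \delta_s=c\,W e^{-b_+L_{s-1}}/(1+L_s).
\]
Geometric growth $L_s\ge L_0z^s$ then makes the terms with $s>k+1$ super-exponentially small relative to $e^{-b_+L_k}$. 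The single term $s=k+1$ is then bounded directly by $cW$.
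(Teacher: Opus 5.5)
Your architecture matches the paper's. You use two candidates that differ along a direction in $(\log|a|,b)$ invisible at the last observed scale, choose between them after $u_t$ is revealed, pass to a convergent parameter sequence, and define the disturbance as the residual of the limit plant. The gap is the step size. With $\delta_t\approx cWe^{-b_+L_{t-1}}$ you get $L_{t+1}\ge b_-L_t-b_+L_{t-1}-C$. Your claim that \eqref{eq:strict-escape} propagates this is an algebra slip: $z^{t+1}<b_-z^t-b_+z^{t-1}$ means $b_--b_+/z>z$, not $b_--b_+/z^2>z$. Since $z^2/(z-1)\ge4$ for $z>1$, whenever $b_+<4$ you have $b_--b_+/z<b_+(1-1/z)<z$ for every $z>1$. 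For example, $b_-=2.69$, $b_+=2.71$, $z=\sqrt3$ satisfy \eqref{eq:strict-escape} but not your condition. Your recursion is the lag-one, known-exponent recursion with threshold $4$. The argument as written therefore proves nothing in the range $(3\sqrt3/2,4)$, which is exactly where the proposition goes beyond the known-exponent case. Also, the recursion contains a negative term, which needs an upper bound on earlier levels. So the inductive hypothesis must be the ratio bound $L_j\ge zL_{j-1}$, not $L_t\ge L_0z^t$.

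The fix is visible in your own estimates. In your parametrization the drifts at $L_{t-1}$ coincide exactly, so that scale puts no constraint on $\delta_t$; your residual at $L_{t-2}$ is super-exponentially small, which is a wasted margin. The binding scale is $L_{t-2}$, where the change is about $\delta_t(L_{t-1}-L_{t-2})e^{b_+L_{t-2}}$. So you may take $\delta_t\asymp We^{-b_+L_{t-2}}/(L_{t-1}-L_{t-2})$. This is the paper's choice $v_n=(L_{n-1},-1)/(L_{n-1}-L_{n-2})$ with step $s_{n-2}=\frac{W(z-1)}{8z}e^{-\alpha_+-b_+L_{n-2}}$, where $\alpha_+=\max_{a\in\mathcal A}\log|a|$. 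Since $(L_t-L_{t-1})/(L_{t-1}-L_{t-2})\ge z-1$, the gain becomes $\gtrsim W(z-1)e^{b_-L_t-b_+L_{t-2}}$. This gives $L_{t+1}\ge b_-L_t-b_+L_{t-2}-C\ge(b_--b_+/z^2)L_t-C\ge zL_t$, and this is where \eqref{eq:strict-escape} actually enters.

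The residual bookkeeping changes accordingly. At scale $L_k$, step $s=k+1$ contributes nothing, but step $s=k+2$ now contributes order $cW$ rather than something negligible. Later steps decay geometrically, and controlling them requires $(L_{s-1}-L_k)/(L_{s-1}-L_{s-2})\le z/(z-1)$. So $c$ must be fixed explicitly; the paper obtains $|w_{k+1}|\le W/4$. The first two steps should use $O(1)$ perturbations: a coefficient split at $t=0$, then a fixed-size step along a direction invisible at $L_0$. No earlier scale constrains them, and they need only $b_->z$.
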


\begin{proof}
Fix a feedback law. Let $\sigma\in\{-1,1\}$ be the common sign of the
coefficients in $\mathcal A$, and define
\[
 T_\sigma(a,b)=(\log|a|,b),
 \qquad
 T_\sigma^{-1}(\alpha,b)=(\sigma e^\alpha,b).
\]
For $\eta=(\alpha,b)$ and $L\in\R$, put $\ell_\eta(L)=\alpha+bL$ and
$G_\eta=G_{T_\sigma^{-1}(\eta)}$. For $y\ne0$ and $L=\log|y|$,
\begin{equation}\label{eq:log-coordinate-drift}
 G_\eta(y)
 =\sigma\operatorname{sgn}(y)e^{\ell_\eta(L)}.
\end{equation}
The image of $Q$ under $T_\sigma$ is
\[
 \widetilde Q\defeq T_\sigma(Q)
 =[\alpha_-,\alpha_+]\times[b_-,b_+],
 \qquad
 \alpha_-=\min_{a\in\mathcal A}\log|a|,
 \qquad
 \alpha_+=\max_{a\in\mathcal A}\log|a|.
\]
We construct the trajectory for every sufficiently large $L_0$. Every lower
bound on $L_0$ imposed below depends only on $Q,W$, and $z$.

\medskip
\noindent\textbf{Step 1: Initialization.}
Put
\[
 \alpha_{\rm m}=\frac{\alpha_-+\alpha_+}{2},
 \qquad
 b_{\rm m}=\frac{b_-+b_+}{2},
 \qquad
 d=\frac{\alpha_+-\alpha_-}{8}.
\]
Let $y_0=e^{L_0}$. After the feedback law selects $u_0$, choose
\[
 \eta_0\in
  \operatorname*{arg\,max}_{\eta\in
  \{(\alpha_{\rm m}-d,b_{\rm m}),
    (\alpha_{\rm m}+d,b_{\rm m})\}}
 |u_0+G_\eta(y_0)|,
\]
and set $y_1=u_0+G_{\eta_0}(y_0)$. Since $b_{\rm m}>b_->z$, for all
sufficiently large $L_0$, the inequality
$\max\{|u+x|,|u+y|\}\ge|x-y|/2$ gives
\[
 \begin{aligned}
 |y_1|
 \ge \frac12
 \left|G_{(\alpha_{\rm m}+d,b_{\rm m})}(y_0)
       -G_{(\alpha_{\rm m}-d,b_{\rm m})}(y_0)\right|=\frac{e^d-e^{-d}}{2}e^{\alpha_{\rm m}+b_{\rm m}L_0}
 \ge e^{zL_0}.
\end{aligned}
\]
Thus $L_1=\log|y_1|\ge zL_0$.

Set $v_1=(1,-L_0^{-1})$. The identity $\ell_{v_1}(L_0)=0$ shows that
$G_{\eta_0}(y_0)=G_{\eta_0+dv_1}(y_0)$. After $u_1$
is selected, choose
\[
 \eta_1\in
 \operatorname*{arg\,max}_{\eta\in\{\eta_0,\eta_0+dv_1\}}
 |u_1+G_\eta(y_1)|,
\]
and set $y_2=u_1+G_{\eta_1}(y_1)$. The two values of $\ell_\eta(L_1)$
differ by $d(L_1/L_0-1)\ge d(z-1)$. Their first coordinates lie in
$[\alpha_{\rm m}-d,\alpha_{\rm m}+2d]$, and their second coordinates lie in
$[b_{\rm m}-d/L_0,b_{\rm m}]$. Thus both candidates belong to
$\widetilde Q$ for all sufficiently large $L_0$. For such $L_0$,
$b_->z$ and $L_1\ge zL_0$ give
\[
 \begin{aligned}
 |y_2|
 \ge\frac12
   \left|G_{\eta_0+dv_1}(y_1)-G_{\eta_0}(y_1)\right|\ge\frac12 e^{\alpha_-+b_-L_1}
       \bigl(e^{d(z-1)}-1\bigr)
 \ge e^{zL_1}.
 \end{aligned}
\]
Therefore $L_2=\log|y_2|\ge zL_1$.
Writing $\eta_1=(\alpha_1,b_1)$, the definitions of $\eta_0$ and $\eta_1$ give
\begin{equation}\label{eq:escape-initial-parameter-bounds}
 \alpha_{\rm m}-d\le\alpha_1\le\alpha_{\rm m}+2d,
 \qquad
 b_{\rm m}-\frac d{L_0}\le b_1\le b_{\rm m}.
\end{equation}

\medskip
\noindent\textbf{Step 2: Recursive construction.}
For each $k\ge0$ for which $L_k$ has been constructed, define
\[
 s_k=\frac{W(z-1)}{8z}e^{-\alpha_+-b_+L_k}.
\]
Eq.~\eqref{eq:escape-initial-parameter-bounds} and the definition of
$s_0$ show that the following two conditions hold for all sufficiently large
$L_0$:
\begin{equation}\label{eq:escape-large-initial-level}
 e^{-b_+(z-1)L_0}\le\frac12,
 \quad
 \left[\alpha_1,\alpha_1+\frac{2z}{z-1}s_0\right]
 \times
 \left[b_1-\frac{2s_0}{(z-1)L_0},b_1\right]
 \subset\widetilde Q.
\end{equation}
The strict inequality \eqref{eq:strict-escape} also allows $L_0$ to be chosen
so that
\begin{equation}\label{eq:escape-growth-margin}
 \log\!\left(\frac{W(z-1)^2}{16z}\right)
 +\alpha_--\alpha_++
 \left(b_--\frac{b_+}{z^2}-z\right)L_0\ge0.
\end{equation}

For $n\ge2$, once $L_n$ has been constructed, define
\begin{equation}\label{eq:escape-update-direction}
 v_n=\frac{(L_{n-1},-1)}{L_{n-1}-L_{n-2}}.
\end{equation}
After the feedback law selects $u_n$, choose
\begin{equation}\label{eq:escape-parameter-update}
 \eta_n\in
 \operatorname*{arg\,max}_{\eta\in
 \{\eta_{n-1},\eta_{n-1}+s_{n-2}v_n\}}
 |u_n+G_\eta(y_n)|,
\end{equation}
and set $y_{n+1}=u_n+G_{\eta_n}(y_n)$.

The initialization gives $L_1\ge zL_0$ and $L_2\ge zL_1$. We now show
inductively that the recursive choice remains in $\widetilde Q$ and satisfies
$L_{n+1}\ge zL_n$. Fix $n\ge2$ and suppose that $y_0,\ldots,y_n$ and
$\eta_0,\ldots,\eta_{n-1}$ have been constructed and that
$L_j\ge zL_{j-1}$ for $1\le j\le n$. Then
\begin{equation}\label{eq:escape-update-properties}
 \ell_{v_n}(L_{n-1})=0,
 \qquad
 |\ell_{v_n}(L_n)|
 =\frac{L_n-L_{n-1}}{L_{n-1}-L_{n-2}}
 \ge z-1,
\end{equation}
and
\begin{equation}\label{eq:escape-update-coordinate-bounds}
 \frac{L_{n-1}}{L_{n-1}-L_{n-2}}
 \le\frac z{z-1},
 \qquad
 \frac1{L_{n-1}-L_{n-2}}
 \le\frac1{(z-1)L_{n-2}}.
\end{equation}
For $0\le k\le n-2$, the induction hypothesis
$L_{k+1}\ge zL_k$ and the first inequality in
\eqref{eq:escape-large-initial-level} give
\[
 \frac{s_{k+1}}{s_k}
 =e^{-b_+(L_{k+1}-L_k)}
 \le e^{-b_+(z-1)L_0}\le\frac12.
\]
For $2\le j<n$, \eqref{eq:escape-parameter-update} gives
$\eta_j-\eta_{j-1}\in\{0,s_{j-2}v_j\}$.
For either candidate $\eta=(\alpha,b)$ in
\eqref{eq:escape-parameter-update},
\begin{equation}\label{eq:escape-candidate-displacement}
 0\le \alpha-\alpha_1
 \le\frac z{z-1}\sum_{k=0}^{n-2}s_k
 \le\frac{2z}{z-1}s_0,
 \quad
 0\le b_1-b
 \le\sum_{k=0}^{n-2}\frac{s_k}{(z-1)L_k}
 \le\frac{2s_0}{(z-1)L_0}.
\end{equation}
By \eqref{eq:escape-large-initial-level}, both candidates in
\eqref{eq:escape-parameter-update} belong to $\widetilde Q$.

Using $e^x-1\ge x$ for $x\ge0$, the choice in
\eqref{eq:escape-parameter-update}, together with
\eqref{eq:log-coordinate-drift} and
\eqref{eq:escape-update-properties}, yields
\[
 \begin{aligned}
 |y_{n+1}|
 &\ge\frac12
  \left|G_{\eta_{n-1}+s_{n-2}v_n}(y_n)
        -G_{\eta_{n-1}}(y_n)\right|\ge\frac12e^{\alpha_-+b_-L_n}
       s_{n-2}|\ell_{v_n}(L_n)|\\
 &\ge\frac{z-1}{2}s_{n-2}e^{\alpha_-+b_-L_n}=\frac{W(z-1)^2}{16z}
  e^{\alpha_--\alpha_++b_-L_n-b_+L_{n-2}}.
 \end{aligned}
\]
In particular, $y_{n+1}\ne0$, so $L_{n+1}=\log|y_{n+1}|$ is defined.
Since $L_{n-2}\le L_n/z^2$ and $L_n\ge L_0$,
\eqref{eq:escape-growth-margin} gives
\[
 \begin{aligned}
 L_{n+1}
 &\ge\log\!\left(\frac{W(z-1)^2}{16z}\right)
     +\alpha_--\alpha_++b_-L_n-b_+L_{n-2}\\
 &\ge\log\!\left(\frac{W(z-1)^2}{16z}\right)
     +\alpha_--\alpha_++
       \left(b_--\frac{b_+}{z^2}\right)L_n\ge zL_n.
 \end{aligned}
\]
This proves the induction step, so the construction continues for every
$n\ge2$.

\medskip
\noindent\textbf{Step 3: A fixed parameter and disturbance sequence.}
Using the $1$-norm on $\R^2$, the coordinate bounds in
\eqref{eq:escape-update-coordinate-bounds} and
$s_{n+1}\le s_n/2$ for $n\ge0$ give
\[
 \sum_{n=2}^\infty\|\eta_n-\eta_{n-1}\|_1
 \le\left(\frac z{z-1}+\frac1{(z-1)L_0}\right)
       \sum_{n=2}^\infty s_{n-2}<\infty.
\]
Including the initial increment gives
$\sum_{n=1}^\infty\|\eta_n-\eta_{n-1}\|_1<\infty$, so
$(\eta_n)_{n\ge0}$ is a Cauchy sequence. Write
\[
 \eta^\star
 =\lim_{n\to\infty}\eta_n\in\widetilde Q,
 \qquad
 p^\star=T_\sigma^{-1}(\eta^\star)\in Q.
\]
The initialization gives $\ell_{\eta_1-\eta_0}(L_0)=0$. For every $k\ge1$,
\eqref{eq:escape-parameter-update} and \eqref{eq:escape-update-properties} give
\[
 \ell_{\eta_{k+1}-\eta_k}(L_k)=0.
\]
For all $k\ge0$ and $n\ge k+2$,
\[
 |\ell_{v_n}(L_k)|
 =\frac{L_{n-1}-L_k}{L_{n-1}-L_{n-2}}
 \le\frac z{z-1}.
\]
Hence, for every $k\ge0$,
\begin{align*}
 |\ell_{\eta^\star-\eta_k}(L_k)|
 \le\frac z{z-1}\sum_{j=k}^\infty s_j
 \le\frac{2z}{z-1}s_k=\frac W4e^{-\alpha_+-b_+L_k}.
\end{align*}
Since both $\eta_k$ and $\eta^\star$ belong to $\widetilde Q$, the mean value
theorem and \eqref{eq:log-coordinate-drift} imply
\[
 \begin{aligned}
 |G_{\eta_k}(y_k)-G_{\eta^\star}(y_k)|
 \le e^{\alpha_++b_+L_k}
       |\ell_{\eta_k-\eta^\star}(L_k)|\le\frac W4.
 \end{aligned}
\]
Define the fixed disturbance sequence by
\[
 w_{k+1}=G_{\eta_k}(y_k)-G_{\eta^\star}(y_k),
 \qquad k\ge0.
\]
Then $|w_{k+1}|\le W/4\le W$, and the definition of $w_{k+1}$ shows that
\eqref{eq:system} holds with the fixed parameter $p^\star$ for every $k\ge0$.
The induction also gives
\[
 \log|y_k|=L_k\ge L_0z^k,
 \qquad k\ge0.
\]
The lower bounds on $L_0$ required in Steps 1 and 2 are satisfied once
$L_0\ge L_\star$ for some $L_\star\ge1$ depending only on $Q,W$, and $z$.
\end{proof}

\begin{proof}[Proof of Theorem~\ref{thm:main}(ii)]
Fix a compact rectangle $Q_0$ with
$p_0=(a_0,b_0)\in\operatorname{int}Q_0$, a feedback law, and
$R_0>0$. Lemma~\ref{lem:critical-constant} gives $z>1$ such that
\[
 b_0(1-z^{-2})>z.
\]
Choose $1<b_-<b_+$ sufficiently close to $b_0$ and a nondegenerate compact
coefficient interval $\mathcal A$ with $0\notin\mathcal A$ such that
\[
 Q_1=\mathcal A\times[b_-,b_+]
 \subset\operatorname{int}Q_0,
 \qquad
 b_- -\frac{b_+}{z^2}>z.
\]
By Proposition~\ref{prop:escape}, there is a constant $L_\star\ge1$ depending
only on $Q_1,W$, and $z$. Choose
$L_0\ge\max\{L_\star,\log(1\vee R_0)\}$. Then there exist
$p^\star\in Q_1\subset Q_0$ and a disturbance sequence satisfying
\eqref{eq:disturbance} such that $y_0=e^{L_0}\ge R_0$ and
$\log|y_t|\ge L_0z^t$ for every $t\ge0$. Thus
Theorem~\ref{thm:main}(ii) holds with $c=L_0$ and $T_0=1$.
\end{proof}

\section{Conclusion}\label{sec:conclusion}

This paper characterizes local robust stabilizability when both the coefficient
and growth exponent of the scalar nonlinear drift are unknown. The critical
exponent is $3\sqrt{3}/2$ under continuous exponent uncertainty and $4$ when
the exponent belongs to a known finite set and the coefficient ranges over a
nondegenerate compact interval. Consequently, a finite exponent set may be
stabilizable while its interval hull is not, even when their common diameter is
arbitrarily small. Thus the stabilizability limit depends on the structure of
the exponent uncertainty, not on its diameter alone.

\appendix

\section{Auxiliary results for Theorem~\ref{thm:main}(i)}
\label{app:positive-proofs}

This appendix proves the drift comparisons and the lemma for the record
recurrences used in Proposition~\ref{prop:away-zero-core} and verifies the
measurability statement in Remark~\ref{rem:bounded-random-disturbances}.

\subsection{Drift comparisons and record bounds}

\begin{lemma}\label{lem:two-scale}
Fix $p^\star=(a,b)$ and $p'=(a',b')$ with $aa'>0$. Let
$x_0,x_1\ne0$, set $L_i=\log|x_i|$, and suppose $L_0\ne L_1$ and
\[
 |G_{p'}(x_i)-G_{p^\star}(x_i)|\le2W,
 \qquad
 |G_{p^\star}(x_i)|\ge4W,
 \quad i=0,1.
\]
For $x\ne0$, put $L=\log|x|$ and define
\[
 E(x)\defeq
 \frac{|L_1-L|}{|L_1-L_0|}\frac{4W}{|G_{p^\star}(x_0)|}
 +
 \frac{|L-L_0|}{|L_1-L_0|}\frac{4W}{|G_{p^\star}(x_1)|}.
\]
Then
\begin{equation}\label{eq:two-state-log-ratio}
 \left|\log\frac{|G_{p'}(x)|}{|G_{p^\star}(x)|}\right|
 \le E(x).
\end{equation}
If $E(x)\le1$, then
\begin{equation}\label{eq:two-state-drift-error}
 |G_{p'}(x)-G_{p^\star}(x)|
 \le e|G_{p^\star}(x)|E(x).
\end{equation}
\end{lemma}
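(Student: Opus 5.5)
We work in logarithmic coordinates and use linearity in $L$. Since $aa'>0$, the function
\[
 f(L)\defeq\log\frac{|G_{p'}(x)|}{|G_{p^\star}(x)|}=\log\frac{|a'|}{|a|}+(b'-b)L,\qquad L=\log|x|,
\]
is affine in $L$, because $|G_{(a,b)}(x)|=|a|e^{bL}$. We first bound $|f(L_i)|$ at the two anchor points and then interpolate or extrapolate linearly.

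\emph{Step 1 (anchor bounds).} Put $r_i=|G_{p'}(x_i)|/|G_{p^\star}(x_i)|$. Since $aa'>0$, both drifts at $x_i$ have the same sign, namely $\operatorname{sgn}(a)\operatorname{sgn}(x_i)$. Hence
\[
 |G_{p'}(x_i)-G_{p^\star}(x_i)|=|G_{p^\star}(x_i)|\,|r_i-1|,
\]
so $|r_i-1|\le\epsilon_i\defeq 2W/|G_{p^\star}(x_i)|\le1/2$. For $|u|\le1/2$ we have $|\log(1+u)|\le2|u|$; indeed, on the negative side $-\log(1-u)\le u/(1-u)\le2u$. Therefore
\[
 |f(L_i)|\le 2\epsilon_i=\frac{4W}{|G_{p^\star}(x_i)|}.
\]
This is exactly the weight appearing in $E$.

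\emph{Step 2 (affine interpolation).} Because $f$ is affine, we can write
\[
 f(L)=\frac{L_1-L}{L_1-L_0}f(L_0)+\frac{L-L_0}{L_1-L_0}f(L_1)
\]
for every real $L$, whether $L$ lies between the anchors or outside them. The triangle inequality then gives $|f(L)|\le E(x)$, which is \eqref{eq:two-state-log-ratio}. The absolute values in $E$ are what handle the extrapolation case, where one coefficient is negative.

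\emph{Step 3 (drift error).} Again by the sign agreement,
\[
 |G_{p'}(x)-G_{p^\star}(x)|=|G_{p^\star}(x)|\,|e^{f(L)}-1|.
\]
The mean value theorem gives $|e^{s}-1|\le e^{|s|}|s|$, which is at most $e|s|$ when $|s|\le1$. With $s=f(L)$ and $|f(L)|\le E(x)\le1$, this yields \eqref{eq:two-state-drift-error}.

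The only delicate point is Step 1, namely the elementary logarithm bound together with the use of $|G_{p^\star}(x_i)|\ge4W$ to keep $r_i$ in $[1/2,3/2]$. Everything else is the observation that the log-ratio is affine in $\log|x|$.
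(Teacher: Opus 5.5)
Your proof is correct and follows the paper's argument essentially step for step. Both proofs use the same four ingredients: the log-ratio is affine in $\log|x|$ because $aa'>0$; the anchor bound $|\log q|\le 2|q-1|$ for $|q-1|\le 1/2$; the affine interpolation identity with the triangle inequality; and $|e^{s}-1|\le e|s|$ for $|s|\le 1$.
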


\begin{proof}
Set
$\Psi(s)=\log(|a'|/|a|)+(b'-b)s$. The common coefficient sign gives
\[
 \frac{G_{p'}(x_i)}{G_{p^\star}(x_i)}=e^{\Psi(L_i)},
 \qquad i=0,1.
\]
The bounds
$|G_{p'}(x_i)-G_{p^\star}(x_i)|\le2W$ and
$|G_{p^\star}(x_i)|\ge4W$ give
\[
 \left|e^{\Psi(L_i)}-1\right|
 \le\frac{2W}{|G_{p^\star}(x_i)|}\le\frac12.
\]
Applying $|\log q|\le2|q-1|$ with
$q=e^{\Psi(L_i)}$ gives
$|\Psi(L_i)|\le4W/|G_{p^\star}(x_i)|$. Since $\Psi$ is affine,
\[
 \Psi(L)=
 \frac{L_1-L}{L_1-L_0}\Psi(L_0)
 +\frac{L-L_0}{L_1-L_0}\Psi(L_1).
\]
Taking absolute values in the interpolation identity and using the bounds on
$\Psi(L_0)$ and $\Psi(L_1)$ gives $|\Psi(L)|\le E(x)$. The identity
\[
 \log\frac{|G_{p'}(x)|}{|G_{p^\star}(x)|}=\Psi(L),
\]
therefore proves \eqref{eq:two-state-log-ratio}. If $E(x)\le1$, then
$|\Psi(L)|\le1$ and
\[
 |G_{p'}(x)-G_{p^\star}(x)|
 =|G_{p^\star}(x)|\,|e^{\Psi(L)}-1|
 \le e|G_{p^\star}(x)|E(x),
\]
which is \eqref{eq:two-state-drift-error}.
\end{proof}

The next lemma transfers an error bound from one state to another.

\begin{lemma}\label{lem:one-anchor-upper-face}
Let $\overline a,\overline b,\Lambda>0$, and let
$p=(a,b)$ and $p'=(a',b')$ satisfy
\[
 0<|a|,|a'|\le\overline a,
 \qquad aa'>0,
 \qquad 0<b\le b'\le\overline b.
\]
If $x_0,x\in\R$ satisfy
\[
 1\le|x_0|,|x|\le e^{\Lambda},
 \qquad
 |G_{p'}(x_0)-G_p(x_0)|\le2W,
\]
then
\begin{equation}\label{eq:one-anchor-upper-face}
 \begin{aligned}
 |G_{p'}(x)-G_p(x)|
 &\le 2W\exp\!\left\{\overline b
       \max\{\log|x|-\log|x_0|,0\}\right\}\\
 &\quad+\overline a e^{\overline b\Lambda}
       (\overline b-b)\Lambda e^{(\overline b-b)\Lambda}.
 \end{aligned}
\end{equation}
\end{lemma}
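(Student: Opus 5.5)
We split the difference through the intermediate parameter $(a,\overline b)$, or more naturally through $p'$ versus $p$ directly with a decomposition of the affine log-ratio. Since $aa'>0$, for $y\ne0$ we have $G_{p'}(y)-G_p(y)=\operatorname{sgn}(a)\operatorname{sgn}(y)\bigl(|a'||y|^{b'}-|a||y|^{b}\bigr)$. Set $f(y)=|a'||y|^{b'}-|a||y|^{b}$, so that $|G_{p'}(y)-G_p(y)|=|f(y)|$. The hypothesis says $|f(x_0)|\le2W$. The plan is to compare $f(x)$ with a rescaled version of $f(x_0)$, namely
\[
 f(x)=\Bigl(\tfrac{|x|}{|x_0|}\Bigr)^{b'}|a'||x_0|^{b'}-\Bigl(\tfrac{|x|}{|x_0|}\Bigr)^{b}|a||x_0|^{b}.
\]
Writing $r=|x|/|x_0|$, this gives
\[
 f(x)=r^{b'}f(x_0)+|a||x_0|^b\bigl(r^{b'}-r^b\bigr).
\]

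Consider first the term $r^{b'}f(x_0)$. If $r\ge1$, then $r^{b'}\le r^{\overline b}=\exp\{\overline b(\log|x|-\log|x_0|)\}$. If $r<1$, then $r^{b'}\le1$. In both cases $|r^{b'}f(x_0)|\le2W\exp\{\overline b\max\{\log|x|-\log|x_0|,0\}\}$, which is exactly the first term in \eqref{eq:one-anchor-upper-face}.

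The remaining term is the main obstacle. We need to bound $|a||x_0|^b|r^{b'}-r^b|$ by $\overline a e^{\overline b\Lambda}(\overline b-b)\Lambda e^{(\overline b-b)\Lambda}$. Put $s=\log r$, so that $|s|\le\Lambda$ because $1\le|x_0|,|x|\le e^\Lambda$. The mean value theorem gives $|r^{b'}-r^b|=|e^{b's}-e^{bs}|\le(b'-b)|s|e^{\max(b,b')|s|\cdot}$. Some care is needed with the sign of $s$: we want the resulting factor times $|x_0|^b$ to be at most $e^{\overline b\Lambda}e^{(\overline b-b)\Lambda}$.

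Rather than work with $s$ directly, we handle the product exactly:
\[
 |x_0|^b\,|r^{b'}-r^b|=|x|^b\,\bigl|r^{b'-b}-1\bigr|.
\]
Since $|x|^b\le e^{b\Lambda}$ and $|r^{b'-b}-1|=|e^{(b'-b)s}-1|\le(b'-b)|s|e^{(b'-b)|s|}\le(b'-b)\Lambda e^{(b'-b)\Lambda}$, we obtain the bound $|a|e^{b\Lambda}(b'-b)\Lambda e^{(b'-b)\Lambda}$. Finally, using $|a|\le\overline a$, $b'\le\overline b$ (so that $b'-b\le\overline b-b$), and $e^{b\Lambda}\le e^{\overline b\Lambda}$, this is at most the second term of \eqref{eq:one-anchor-upper-face}.

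The estimate $e^{b\Lambda}\le e^{\overline b\Lambda}$ is wasteful but matches the statement. The key step is the factorization $|x_0|^b r^b=|x|^b$, which avoids the sign issue in $s$. Adding the two bounds via the triangle inequality gives \eqref{eq:one-anchor-upper-face}.
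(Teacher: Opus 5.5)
Your proof is correct and is essentially the paper's argument. Your decomposition $f(x)=r^{b'}f(x_0)+|a|\,|x|^b(r^{b'-b}-1)$ with $r=|x|/|x_0|$ is exactly the paper's identity (with $\chi=r$, $d=b'-b$), followed by the same two bounds $r^{b'}\le e^{\overline b\max\{\log r,0\}}$ and $|e^{ds}-1|\le d|s|e^{d|s|}$ with $|s|\le\Lambda$; the opening remark about splitting through $(a,\overline b)$ and the unfinished mean-value estimate with $e^{\max(b,b')|s|}$ are never used and can simply be deleted.
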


\begin{proof}
Put $d=b'-b$ and $\chi=|x|/|x_0|$.
Because $aa'>0$, $G_p(x)$ and $G_{p'}(x)$ have the same sign at
each nonzero state. The exact decomposition
\[
 |a'||x|^{b'}-|a||x|^b
 =\bigl(|a'||x_0|^{b'}-|a||x_0|^b\bigr)\chi^{b'}
 +|a||x|^b(\chi^d-1)
\]
separates the prediction error observed at $x_0$ from the change caused by
the exponent difference $d$. It therefore gives
\[
 |G_{p'}(x)-G_p(x)|
 \le2W\chi^{b'}+|a||x|^b|\chi^d-1|.
\]
$2W\chi^{b'}$ is at most
$2W\exp\{\overline b\max\{\log\chi,0\}\}$. Moreover,
$d\ge0$ and $|\log\chi|\le\Lambda$ give
\[
 \begin{aligned}
 |a||x|^b|\chi^d-1|
 =|a||x|^b|e^{d\log\chi}-1|\le \overline a e^{\overline b\Lambda}
       d|\log\chi|e^{d|\log\chi|}\le\overline a e^{\overline b\Lambda}
       (\overline b-b)\Lambda
       e^{(\overline b-b)\Lambda}.
 \end{aligned}
\]
This proves \eqref{eq:one-anchor-upper-face}.
\end{proof}

The following scalar inequality yields the critical exponent in the record
argument.

\begin{lemma}\label{lem:critical-constant}
For $b>0$,
\[
 b(1-z^{-2})<z
 \qquad\text{for every }z>1
\]
if and only if $b<3\sqrt{3}/2$. At $b=3\sqrt{3}/2$, equality holds at
$z=\sqrt{3}$. If $b>3\sqrt{3}/2$, there is a $z>1$ such that
$b(1-z^{-2})>z$.
\end{lemma}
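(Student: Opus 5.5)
The plan is to reduce the statement to a one-variable calculus problem. Set
\[
 f(z)\defeq\frac{z}{1-z^{-2}}=\frac{z^3}{z^2-1},\qquad z>1.
\]
Because $1-z^{-2}>0$ for $z>1$, the inequality $b(1-z^{-2})<z$ is equivalent to $b<f(z)$. Likewise, $b(1-z^{-2})>z$ is equivalent to $b>f(z)$, and equality in one corresponds to equality in the other. Every claim in the lemma is therefore a statement about $m\defeq\inf_{z>1}f(z)$ and about where this infimum is attained.

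The first step is to compute $m$. Differentiating gives
\[
 f'(z)=\frac{3z^2(z^2-1)-2z^4}{(z^2-1)^2}=\frac{z^2(z^2-3)}{(z^2-1)^2}.
\]
Hence $f$ is strictly decreasing on $(1,\sqrt3)$ and strictly increasing on $(\sqrt3,\infty)$. We also have $f(z)\to\infty$ both as $z\downarrow1$ and as $z\to\infty$. The unique minimizer is therefore $z=\sqrt3$, with
\[
 f(\sqrt3)=\frac{3\sqrt3}{2}.
\]
This value is attained, so $m=3\sqrt3/2$ is a minimum.

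The three conclusions then follow directly.

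If $b<3\sqrt3/2$, then $b<f(z)$ for every $z>1$, so $b(1-z^{-2})<z$ for every $z>1$.

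If $b\ge3\sqrt3/2$, take $z=\sqrt3$. Then $b\ge f(\sqrt3)$, so the strict inequality fails at $z=\sqrt3$. This proves the "only if" direction.

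At $b=3\sqrt3/2$, direct substitution gives $b(1-1/3)=\sqrt3=z$, so equality holds at $z=\sqrt3$.

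If $b>3\sqrt3/2=f(\sqrt3)$, then $z=\sqrt3$ already gives $b(1-z^{-2})>z$.

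There is no genuine obstacle in this argument. The only point requiring care is that the minimizer is global on $(1,\infty)$ and not merely a critical point. This is secured by the sign of $f'$ together with the boundary limits of $f$.
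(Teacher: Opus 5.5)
Your proof is correct and follows essentially the paper's argument: rewrite the inequality as $b<z^3/(z^2-1)$ and locate the unique minimum $3\sqrt3/2$ at $z=\sqrt3$ from the sign of the derivative. The only minor difference is in the case $b>3\sqrt3/2$, where you exhibit $z=\sqrt3$ directly; the paper appeals to continuity, and your choice is slightly more explicit.
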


\begin{proof}
For $z>1$, the inequality $b(1-z^{-2})\ge z$ is equivalent to
\[
 b\ge \frac{z^3}{z^2-1}.
\]
The derivative of the function on the right is
$z^2(z^2-3)/(z^2-1)^2$.
Its unique minimum on $(1,\infty)$ is attained at $z=\sqrt{3}$ and equals
$3\sqrt{3}/2$. The assertion for $b>3\sqrt{3}/2$ follows by continuity.
\end{proof}

\begin{lemma}
\label{lem:record-closure}
Fix $L_{\max},C_0,\Lambda_0\ge0$,
$0<\overline b\le3\sqrt3/2$, and $\kappa>0$.
There is a finite constant
$M=M(L_{\max},C_0,\Lambda_0,\overline b,\kappa)$ with the following
property. Let $N\ge0$, $b\in(0,\overline b]$, and let
$(L_n)_{n=0}^N$ satisfy $0\le L_0\le L_{\max}$.
For $0\le n<N$, assume
\begin{equation}\label{eq:abstract-record-basic}
 L_n+\log2<L_{n+1}\le\overline bL_n+C_0.
\end{equation}
For $2\le n<N$ with $L_{n-2}\ge\Lambda_0$, assume
\begin{equation}\label{eq:abstract-record-two-state}
 L_{n+1}
 \le b(L_n-L_{n-2})+\log(1+L_n)+C_0.
\end{equation}
For $1\le n<N$ with $L_{n-1}\ge\Lambda_0$ and
$\overline b-b\le\kappa e^{-\overline bL_n}/(1+L_n)$, assume
\begin{equation}\label{eq:abstract-record-one-state}
 L_{n+1}\le\overline b(L_n-L_{n-1})+C_0.
\end{equation}
Then $\max_{0\le n\le N}L_n\le M$.
\end{lemma}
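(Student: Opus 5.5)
The plan is a dichotomy on how close the true exponent $b$ is to $\overline b$: either the one-state bound \eqref{eq:abstract-record-one-state} is available at every record that matters, or $b$ is separated from $\overline b$, and hence from $3\sqrt3/2$, by a margin depending only on the data. The two cases will then be closed by two different recurrence arguments.

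\textbf{Step 1 (one-state recurrence).} Suppose \eqref{eq:abstract-record-one-state} holds at every $n$ with $L_{n-1}\ge\Lambda_0$. Since \eqref{eq:abstract-record-basic} makes $(L_n)$ increase by more than $\log2$ per step, only boundedly many indices have $L_{n-1}<\Lambda_0$, and on those \eqref{eq:abstract-record-basic} gives $L_n\le \overline b^{\,k}(\cdot)+\dots$, a finite bound $M_0(L_{\max},\Lambda_0,C_0,\overline b)$. Afterwards put $d_n=L_{n+1}-L_n>\log2$. The bound $L_{n+1}\le\overline b\,d_{n-1}+C_0$ is the recurrence with characteristic polynomial $z^2=\overline b(z-1)$, which has no real root $>1$ because $\overline b\le3\sqrt3/2<4$. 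Concretely, I will track the ratio $\rho_n=L_{n+1}/L_n>1$. The bound gives $\rho_n\le\overline b(1-\rho_{n-1}^{-1})+C_0/L_n$. Since $\rho\mapsto\overline b(1-1/\rho)$ lies strictly below $\rho$ on $(1,\infty)$ with a gap bounded away from zero on compacts (when $\overline b<4$), $\rho_n$ is driven below $1+\log2/L_n$ within boundedly many steps, contradicting $d_n>\log2$ once $L_n$ is large. This yields a bound $M_1(L_{\max},C_0,\Lambda_0,\overline b)$.

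\textbf{Step 2 (case split).} Let $\Lambda(b)$ be the largest $L$ with $\overline b-b\le\kappa e^{-\overline bL}/(1+L)$, which is decreasing in $\overline b-b$.
\begin{itemize}
\item If $\Lambda(b)\ge\overline b M_1+C_0$, the one-state hypothesis holds at every record with $L_n\le\Lambda(b)$. Step 1 then keeps $L_n\le M_1$ while this persists. The first record to exceed $\Lambda(b)$ would satisfy $L_n\le\overline bM_1+C_0\le\Lambda(b)$ by \eqref{eq:abstract-record-basic}, a contradiction, so $L_n\le M_1$ throughout.
\item Otherwise $\overline b-b\ge\kappa e^{-\overline b(\overline bM_1+C_0)}/(1+\overline bM_1+C_0)$. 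Hence $3\sqrt3/2-b\ge\varepsilon_0>0$ with $\varepsilon_0$ depending only on the data.
\end{itemize}

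\textbf{Step 3 (two-state recurrence with margin).} This is where I expect the main work. Here $b\le3\sqrt3/2-\varepsilon_0$, and Lemma~\ref{lem:critical-constant}, with compactness in $z$, gives $\delta>0$ such that $b(1-z^{-2})\le z-\delta$ for all $z\in(1,Z]$. For $z$ large this holds automatically since $b(1-z^{-2})\le b$. Dividing \eqref{eq:abstract-record-two-state} by $L_n$ gives
\[
\rho_n\le b\bigl(1-(\rho_{n-1}\rho_{n-2})^{-1}\bigr)+\frac{\log(1+L_n)+C_0}{L_n}.
\]
The error term tends to zero as $L_n\to\infty$. The difficulty is that this is a two-lag map, so the fixed-point comparison must be done on a suitable potential rather than on $\rho_n$ alone. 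I would first try $\max(\rho_{n-1},\rho_{n-2})$ or the geometric mean $\sqrt{\rho_{n-1}\rho_{n-2}}$. With $z=\sqrt{\rho_{n-1}\rho_{n-2}}$ the inequality reads $\rho_n\le b(1-z^{-2})+o(1)\le z-\delta+o(1)$, so the potential decreases by a fixed amount every couple of steps once $L_n$ exceeds a threshold depending on $\delta$ and $C_0$. After boundedly many steps this forces $\rho_n<1+\log2/L_n$, contradicting $d_n>\log2$. Therefore $L_n$ cannot exceed a threshold $M_2(\varepsilon_0,C_0,\Lambda_0,L_{\max},\overline b)$, plus the boundedly many $\overline b$-fold increases allowed by \eqref{eq:abstract-record-basic} before the potential argument takes effect. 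Taking $M=\max\{M_1,M_2\}$ finishes the proof.
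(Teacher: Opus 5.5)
Your proof is correct, but it takes a genuinely different route from the paper's. The paper argues by contradiction and compactness. If no uniform $M$ existed, there would be admissible sequences of every length. A diagonal extraction then gives $b_m\to b_\infty$ and $L^{(m)}_k\to X_k$ with $X_k\to\infty$, and the recurrences pass to the limit at each fixed index. With $r=\limsup_k X_{k+1}/X_k$, this gives $r\le b_\infty(1-r^{-2})$ when $b_\infty<3\sqrt3/2$, contradicting Lemma~\ref{lem:critical-constant}. When $b_\infty=\overline b=3\sqrt3/2$, it gives $r\le\overline b(1-r^{-1})$, which would force $\overline b\ge4$. There the self-referential side condition $\overline b-b\le\kappa e^{-\overline bL_n}/(1+L_n)$ costs nothing, because at a fixed index $L^{(m)}_k\le U_k$ and $b_m\to\overline b$.

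You make this quantitative instead, in three pieces:
\begin{itemize}
\item a uniform bound $M_1$ under the one-state recurrence, via the ratio-decrement mechanism of Proposition~\ref{prop:scalar}, with gap $\inf_{\rho\ge1}(\rho-\overline b+\overline b/\rho)>0$ because $\overline b<4$;
\item the observation that the side condition sustains itself once it holds up to level $\overline bM_1+C_0$;
\item otherwise, an explicit margin $\varepsilon_0$ of $b$ below $3\sqrt3/2$, which turns Lemma~\ref{lem:critical-constant} into a uniform gap $\delta$ and a decreasing potential for the two-lag ratio recursion.
\end{itemize}

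Your route yields an effective constant $M$, though an enormous one, since $\varepsilon_0$ is exponentially small in $M_1$ and $\delta$ is of order $\varepsilon_0$. It also makes explicit that the one-state bound is needed only for $b$ within $\varepsilon_0$ of $\overline b$. The paper's route is shorter, because the limit absorbs all the $o(1)$ bookkeeping and no potential has to be designed.

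When you write Step~3 out, two points need care.
\begin{itemize}
\item Uniformity of $\delta$ over $b\le3\sqrt3/2-\varepsilon_0$ follows because $z-b(1-z^{-2})$ is decreasing in $b$.
\item The geometric mean $\sqrt{\rho_{n-1}\rho_{n-2}}$ is not itself monotone. Use $P_n=\max\{\rho_n,\rho_{n-1}\}$ instead: $\rho_n\le P_{n-1}-\delta/2$ at consecutive indices gives $P_{n+1}\le P_{n-1}-\delta/2$. Since $\rho_n>1$ and $P\le\overline b+1$ at the threshold, only boundedly many records can follow.
\end{itemize}
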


\begin{proof}
Suppose that no such $M$ exists. Define
\[
 U_0=L_{\max},
 \qquad
 U_{n+1}=\overline bU_n+C_0,
 \quad n\ge0.
\]
The upper bound in \eqref{eq:abstract-record-basic} gives $L_n\le U_n$ by
induction. If the lengths of all sequences satisfying the hypotheses were at
most $N_*$, then $\max_{0\le n\le N_*}U_n$ would be the required uniform
bound. Hence, for every $m\ge1$, there is a sequence satisfying the hypotheses
with at least $m+1$ entries. Truncate it after its $m$th entry, and write its
exponent and levels as $b_m$ and
$(L_k^{(m)})_{k=0}^m$.

For each fixed $k$, $0\le L_k^{(m)}\le U_k$ whenever $m\ge k$. Apply the
Bolzano--Weierstrass theorem successively to
$b_m,L_0^{(m)},L_1^{(m)},\ldots$ and then take the diagonal subsequence. On
this subsequence, $b_m$ and every fixed coordinate $L_k^{(m)}$ converge.
After relabeling this subsequence, denote the limits by
$b_\infty\in[0,\overline b]$ and $X_k\ge0$, $k\ge0$:
\[
 b_m\to b_\infty,
 \qquad
 L_k^{(m)}\to X_k
 \quad\text{for every fixed }k.
\]
For every fixed $k$, letting $m\to\infty$ in the lower bound in
\eqref{eq:abstract-record-basic} gives $X_{k+1}\ge X_k+\log2$. Hence
$X_k\to\infty$ and $X_k>0$ for $k\ge1$.
Set
\[
 r_k=\frac{X_{k+1}}{X_k},\qquad k\ge1,
 \qquad
 r=\limsup_{k\to\infty}r_k.
\]
The upper bound in \eqref{eq:abstract-record-basic} gives
$1<r_k\le\overline b+C_0/X_k$, so $1\le r\le\overline b$. This is already a
contradiction if $\overline b<1$.

Suppose first that $b_\infty<3\sqrt3/2$. Since the sequence
$(X_k)_{k\ge0}$ diverges, choose $K\ge2$ such that
$X_{k-2}>\Lambda_0$ for every $k\ge K$. Fix $k\ge K$. For all sufficiently
large $m$, $L_{k-2}^{(m)}\ge\Lambda_0$. Letting $m\to\infty$ in
\eqref{eq:abstract-record-two-state} gives
\[
 X_{k+1}\le
 b_\infty(X_k-X_{k-2})+\log(1+X_k)+C_0.
\]
Divide by $X_k$ and take $\limsup_{k\to\infty}$. Using
$X_{k-2}/X_k=(r_{k-1}r_{k-2})^{-1}$ yields
\[
 r\le b_\infty(1-r^{-2}).
\]
If $r=1$ or $b_\infty=0$, the right-hand side is zero, which is impossible.
For $r>1$ and $0<b_\infty<3\sqrt3/2$, the inequality contradicts
Lemma~\ref{lem:critical-constant}.

It remains to consider $b_\infty=3\sqrt3/2$. Then
$\overline b=b_\infty$ and $b_m\to\overline b$. Choose $K\ge1$ such that
$X_{k-1}>\Lambda_0$ for every $k\ge K$, and fix $k\ge K$. For all sufficiently
large $m$, $L_{k-1}^{(m)}\ge\Lambda_0$ and
\[
 \overline b-b_m
 \le \kappa\frac{e^{-\overline bU_k}}{1+U_k}
 \le \kappa\frac{e^{-\overline bL_k^{(m)}}}{1+L_k^{(m)}}.
\]
Here the first inequality follows from $b_m\to\overline b$, and the second
from $L_k^{(m)}\le U_k$ and the monotonicity of
$e^{-\overline bx}/(1+x)$ on $[0,\infty)$. Thus
\eqref{eq:abstract-record-one-state} applies for all sufficiently large $m$.
Letting $m\to\infty$ gives
\[
 X_{k+1}\le\overline b(X_k-X_{k-1})+C_0,
 \qquad k\ge K.
\]
Dividing by $X_k$ and taking $\limsup_{k\to\infty}$ yields
\[
 r\le\overline b(1-r^{-1}).
\]
For $r=1$ this reads $1\le0$. For $r>1$, it implies
\[
 \overline b\ge\frac{r^2}{r-1}
 =4+\frac{(r-2)^2}{r-1}\ge4,
\]
contrary to $\overline b=3\sqrt3/2$. Since
$0\le b_\infty\le\overline b\le3\sqrt3/2$, the two cases are exhaustive.
\end{proof}

\subsection{Borel measurability of the controller}

\begin{lemma}\label{lem:borel-controller}
Let $\mathcal P\subset\R\times(0,\infty)$ be nonempty and compact. For each
$t\ge0$, let $\mu_t(y_0,\ldots,y_t)$ be the value of $u_t$ specified by
\eqref{eq:fiber-midpoint-controller}, with value zero when
$\mathcal P_t=\varnothing$. Then $\mu_t:\R^{t+1}\to\R$ is Borel measurable.
\end{lemma}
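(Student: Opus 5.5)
We need to show that each $\mu_t$ is Borel on $\R^{t+1}$. We do this by expressing every ingredient of \eqref{eq:fiber-midpoint-controller} as a Borel function of the history $h=(y_0,\ldots,y_t)$, through the extremal quantities of a compact set that depends upper semicontinuously on $h$.

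\textbf{Step 1: the feasible set and its emptiness.} Write $\mathcal P_t(h)$ as the set of $p\in\mathcal P$ satisfying the constraints in \eqref{eq:feasible-set}. The inputs $u_k=\mu_k(y_0,\ldots,y_k)$ for $k<t$ enter these constraints, so we argue by induction on $t$ and assume $\mu_0,\ldots,\mu_{t-1}$ are Borel. Define
\[
\Phi_t(h,p)=\max_{0\le k<t}\bigl|y_{k+1}-u_k(h)-G_p(y_k)\bigr|.
\]
This function is Borel in $h$ and continuous in $p$, since $(y,p)\mapsto a\phi_b(y)$ is continuous on $\R\times\R\times(0,\infty)$. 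It is therefore Carathéodory, hence jointly Borel. We have $\mathcal P_t(h)=\{p\in\mathcal P:\Phi_t(h,p)\le W\}$.

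Fix a countable dense set $\{p_j\}\subset\mathcal P$. Quantities such as $\min_{p\in\mathcal P}\Phi_t(h,p)$ are measurable as countable infima over the $p_j$. The difficulty is that membership $p\in\mathcal P_t(h)$ is a closed condition that dense points may miss. We handle this by approximation: set $\mathcal P_t^\varepsilon(h)=\{p:\Phi_t\le W+\varepsilon\}$ and use compactness to pass $\varepsilon\downarrow0$. Then $\{h:\mathcal P_t(h)\ne\varnothing\}=\{h:\min_{\mathcal P}\Phi_t(h,\cdot)\le W\}$, and the minimum over the compact set equals the infimum over the dense set. This set is Borel.

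\textbf{Step 2: maximal exponent and the fiber midpoint.} Let
\[
\beta_t(h)=\max\{b:(a,b)\in\mathcal P_t(h)\}.
\]
For $\beta_t$ we use $\{\beta_t\ge c\}=\{\min_{p\in\mathcal P,\,b\ge c}\Phi_t(h,p)\le W\}$. The set $\{p\in\mathcal P:b\ge c\}$ is compact, so this minimum is again measurable, being a countable infimum over a dense subset of that compact set. Hence $\beta_t$ is Borel. The fiber is $\mathcal F_t(h)=\{p\in\mathcal P:\Phi_t(h,p)\le W,\ b=\beta_t(h)\}$. For $\max_{p\in\mathcal F_t}G_p(y_t)$, we write
\[
\Bigl\{\max_{\mathcal F_t}G_p(y_t)\ge c\Bigr\}=\Bigl\{\min_{p\in\mathcal P}\Psi(h,p)\le 0\Bigr\},
\qquad
\Psi=\max\bigl\{\Phi_t-W,\ \beta_t-b,\ c-G_p(y_t)\bigr\}.
\]
We always have $b\le\beta_t$ on $\mathcal P_t$. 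The function $\Psi$ is Carathéodory in $(h,p)$, and its minimum over compact $\mathcal P$ is attained, so the equivalence is exact. The minimum over $\mathcal F_t$ is treated symmetrically. Therefore $m_t(\mathcal F_t)$ is Borel on the Borel set where $\mathcal P_t\ne\varnothing$.

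\textbf{Step 3: assembly.} The event $\{t\ge t_{\rm a}\}=\bigcup_{1\le s\le t}\{|y_s|>2W\}$ is open. Then $\mu_t$ is a finite piecewise combination of Borel functions over Borel pieces: $0$ off $\{\mathcal P_t\neq\varnothing\}$, $0$ for $t<t_{\rm a}$, and $-m_t(\mathcal F_t)$ otherwise. This closes the induction.

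The main obstacle is the measurability of the extremal quantities over a history-dependent compact set, particularly the fiber defined through the equality $b=\beta_t(h)$. The device in Step 2 addresses it: encode the fiber by the single sublevel condition $\min_p\Psi\le0$, whose minimum over compact $\mathcal P$ is attained, combined with the fact that a minimum of a function continuous in $p$ over a compact set is a countable infimum, hence Borel in $h$. We still need to check the joint measurability of Carathéodory functions, which is standard. Alternatively, we could cite measurable selection and argmax theorems for Carathéodory functions on compact metric spaces.
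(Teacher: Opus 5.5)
Your proof is correct, but it takes a more elementary route than the paper's.

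\textbf{The paper's route.} The paper treats $\mathbf y\mapsto\mathcal P_t(\mathbf y)$ as a compact-valued correspondence. It shows that the graph $\Gamma_t$ is Borel. It then invokes the Arsenin--Kunugui theorem (Borel sets with $\sigma$-compact sections have Borel projections) to get measurability of $\{\mathbf y:\mathcal P_t(\mathbf y)\cap O\ne\varnothing\}$ for open $O$. Finally it applies the measurable maximum theorem twice: once to $(a,b)\mapsto b$ to obtain the argmax correspondence $\mathcal F_t$, and once to $\pm G_p(y_t)$ to obtain the two extrema in $m_t(\mathcal F_t)$.

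\textbf{Your route.} You exploit the fact that $\mathcal P_t(h)$ is the $W$-sublevel set, over a fixed compact set, of a function $\Phi_t$ that is Borel in $h$ and continuous in $p$. Every existence question then becomes a minimum over a fixed compact set, which equals a countable infimum over a dense subset and is therefore Borel in $h$. You handle the fiber by first proving $\beta_t$ Borel through $\{\beta_t\ge c\}$. You then replace the equality $b=\beta_t(h)$ by the one-sided constraint $\beta_t(h)-b\le 0$ inside $\Psi$. This is exact because $b\le\beta_t(h)$ on $\mathcal P_t(h)$ and the minimum of $\Psi(h,\cdot)$ over $\mathcal P$ is attained.

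\textbf{Comparison.} Your approach gives a self-contained proof with no descriptive set theory; in effect it is a hands-on proof of the measurable maximum theorem in this setting. The paper's argument is shorter given the citations and does not depend on the sublevel-set representation of the feasible set.

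\textbf{Unnecessary hedges.} Two of your hedges can be dropped. The $\varepsilon$-approximation in Step~1 plays no role once you use that the minimum over $\mathcal P$ equals the infimum over the dense set. Joint measurability of Carath\'eodory functions is never used either. The countable-infimum argument only needs $h\mapsto\Phi_t(h,p_j)$ and $h\mapsto\Psi(h,p_j)$ to be Borel for each fixed $p_j$, together with a separate countable dense subset of each compact set $\{p\in\mathcal P:b\ge c\}$.
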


\begin{proof}
We argue by induction on $t$. Eq.~\eqref{eq:fiber-midpoint-controller}
gives $\mu_0=0$. Fix $t\ge1$, suppose that
$\mu_0,\ldots,\mu_{t-1}$ are Borel, and write
$\mathbf y=(y_0,\ldots,y_t)\in\R^{t+1}$. Define
\[
 \Gamma_t=\left\{(\mathbf y,p)\in\R^{t+1}\times\mathcal P:
 |y_{k+1}-\mu_k(y_0,\ldots,y_k)-G_p(y_k)|\le W,
 \ 0\le k<t\right\}.
\]
The induction hypothesis and the continuity of $(p,y)\mapsto G_p(y)$ imply
that $\Gamma_t$ is Borel. Its section at $\mathbf y$ is the compact set
$\mathcal P_t(\mathbf y)$ defined in \eqref{eq:feasible-set}. For every open
$O\subset\mathcal P$, the sections of
$\Gamma_t\cap(\R^{t+1}\times O)$ are $\sigma$-compact. The
Arsenin--Kunugui theorem \cite[Theorem~18.18]{Kechris1995} therefore shows that
\[
 \{\mathbf y:\mathcal P_t(\mathbf y)\cap O\ne\varnothing\}
\]
is Borel. Thus $\mathbf y\mapsto\mathcal P_t(\mathbf y)$ is a measurable compact-valued
correspondence on the Borel set where it is nonempty.

The measurable maximum theorem
\cite[Theorem~18.19]{AliprantisBorder2006}, applied to $(a,b)\mapsto b$,
shows that $\mathbf y\mapsto\mathcal F_t(\mathbf y)$ is a measurable compact-valued
correspondence on $\{\mathbf y:\mathcal P_t(\mathbf y)\ne\varnothing\}$.
A second application, to $(\mathbf y,p)\mapsto G_p(y_t)$ and its negative, shows
that the two extrema defining $m_t(\mathcal F_t)$ are Borel on
$\{\mathbf y:\mathcal P_t(\mathbf y)\ne\varnothing\}$. The set
$\{\mathbf y:\mathcal P_t(\mathbf y)\ne\varnothing\}$ and
$\{\mathbf y:\max_{1\le s\le t}|y_s|>2W\}$ are Borel. Hence the controller
\eqref{eq:fiber-midpoint-controller}, extended by $\mu_t(\mathbf y)=0$
when $\mathcal P_t(\mathbf y)=\varnothing$, is Borel measurable. Since
$\mu_0=0$, induction proves the claim for every $t\ge0$.
\end{proof}

\section{Proof of Theorem~\ref{thm:finite-set}}

\subsection{The case of a known exponent}

When the exponent is known, a past observation bounds the diameter of the
remaining coefficient interval and yields a direct recurrence for successive
new maxima of $|y_t|$.

\begin{proposition}\label{prop:scalar}
Fix $0<b<4$ and a nonempty compact interval
$\mathcal A\subset\R$. Initialize $\mathcal P_t$ in
\eqref{eq:feasible-set} by
$\mathcal A\times\{b\}$ and use
\begin{equation}\label{eq:full-midpoint-controller}
 u_t=
 \begin{cases}
  -m_t(\mathcal P_t),&\mathcal P_t\ne\varnothing,\\
  0,&\mathcal P_t=\varnothing.
 \end{cases}
\end{equation}
This controller robustly stabilizes \eqref{eq:system}.
For every $Y\ge0$, its state bound is uniform over $a\in\mathcal A$,
$|y_0|\le Y$, and all disturbance sequences satisfying
\eqref{eq:disturbance}. The bound depends on $Y,W,b$ and on $\mathcal A$
only through $\max_{a\in\mathcal A}|a|$.
\end{proposition}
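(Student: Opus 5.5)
The plan is to mirror the record-time argument of Proposition~\ref{prop:away-zero-core}, with the two-scale comparison replaced by a single-anchor comparison. With the exponent fixed, each feasible set is $\mathcal P_t=\mathcal A_t\times\{b\}$ for a compact interval $\mathcal A_t\ni a^\star$. The midpoint rule gives
\[
 |y_{t+1}|\le W+\tfrac12\operatorname{diam}(\mathcal A_t)\,|y_t|^b .
\]
Every past observation at a state $y_k\ne0$ confines the feasible coefficients to an interval of length at most $2W/|y_k|^b$ around $(y_{k+1}-u_k)/\phi_b(y_k)$. Hence
\[
 \operatorname{diam}(\mathcal A_t)\le\min\Bigl\{2\overline a,\ \min_{k<t}\frac{2W}{|y_k|^b}\Bigr\},
 \qquad \overline a=\max_{a\in\mathcal A}|a| .
\]
This is the only place $\mathcal A$ enters, and it enters only through $\overline a$. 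No sign or away-from-zero reduction is needed, since the bound is linear in $a$.

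Next, define records $\tau_n$ and levels $L_n$ exactly as in \eqref{eq:record-times}. This gives \eqref{eq:between-records} and \eqref{eq:record-increment}. Applying the state equation at $t=\tau_{n+1}-1$ with the crude bound $\operatorname{diam}\le2\overline a$ gives $L_{n+1}\le bL_n+C_0$ for a constant $C_0$ depending on $W,b,\overline a$. When $n\ge1$, the observation at $y_{\tau_{n-1}}$ precedes $t$, and $\max\{1,|y_{\tau_{n-1}}|\}=e^{L_{n-1}}$. If $|y_{\tau_{n-1}}|>1$, then
\[
 |y_{t+1}|\le W+\frac{W}{|y_{\tau_{n-1}}|^b}\,2^be^{bL_n},
\]
which yields the one-anchor record bound
\[
 L_{n+1}\le b(L_n-L_{n-1})+C_0 .
\]
The degenerate case $L_{n-1}=0$ is handled by the crude bound, since then $L_n$ is bounded by $bL_{n-1}+C_0$, and the small-state bound \eqref{eq:top-small-state} keeps the levels from dropping back.

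Finally, close the recurrence as in the last case of Lemma~\ref{lem:record-closure}. Suppose the levels were unbounded. By a deterministic monotonicity argument, or by the compactness/diagonal argument used there (here $b$ is fixed, so it is simpler), take ratios $r=\limsup X_{k+1}/X_k$. The one-anchor bound gives $r\le b(1-r^{-1})$, i.e.\ $b\ge r^2/(r-1)\ge4$, contradicting $b<4$. This yields a uniform bound $M$ depending only on $\log\max\{1,Y\}$, $C_0$ and $b$, hence on $Y,W,b,\overline a$.

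The main obstacle is making this closure quantitative and uniform. The limiting-ratio argument must be applied to sequences that could be arbitrarily long. I would reuse the diagonal-subsequence device of Lemma~\ref{lem:record-closure} verbatim, relying on the a priori bounds $L_n\le U_n$ from the crude recurrence.
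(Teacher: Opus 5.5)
Your proposal is correct, and its first half matches the paper's. In both, $\mathcal A_t$ is a compact interval, the midpoint rule gives $|y_{t+1}|\le W+\tfrac12 d_t|y_t|^b$, each past observation gives $d_t\le 2W/|y_k|^b$, and $\mathcal A$ enters only through $\max_{a\in\mathcal A}|a|$.

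The differences lie in the record bookkeeping and in how the recurrence is closed.

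\textbf{Records.} The paper starts its records at the first time $|y_t|>1$ and uses plain new maxima $|y_t|>X_n$. Every record level is then positive, and $L_{n+1}\le b(L_n-L_{n-1})+\log(1+2W)$ holds for all $n\ge1$ with no degenerate case. With your doubling records, $L_{n-1}=0$ can occur only when $n=1$ and $L_0=0$. In that case your crude bound is literally the one-anchor bound, so the appeal to \eqref{eq:top-small-state} is unnecessary.

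\textbf{Closure.} The paper does not use compactness. It sets $\gamma=\inf_{r\ge1}\{r-b(1-r^{-1})\}$, which is positive exactly because $b<4$. It then chooses $\Lambda$ larger than the first two levels with $c/\Lambda\le\min\{1,\gamma/2\}$. Once a level $L_j$ reaches $\Lambda$, the ratios $\rho_n=L_n/L_{n-1}$ start from $\rho_{j+1}\le b+1$ and drop by at least $\gamma/2$ per record. Hence at most $N>2b/\gamma$ further records occur, which gives the explicit bound $\exp\{\max\{\Lambda,(b+1)^N(b\Lambda+c)\}\}$.

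Your route reuses the diagonal device of Lemma~\ref{lem:record-closure}. The doubling increment supplies $X_k\to\infty$, and the case $r=1$ must be dispatched separately, since it gives $1\le0$. This reaches the same inequality $r\le b(1-r^{-1})$ that the paper's $\gamma>0$ encodes, and it keeps the argument parallel to the main proof. However, it yields only an existential constant $M$. The paper's argument is self-contained and quantitative, and it also shows that only finitely many records occur beyond level $\Lambda$.
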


\begin{proof}
For $t\ge0$, let
\[
 \mathcal A_t\defeq\{a':(a',b)\in\mathcal P_t\}.
\]
Each constraint in \eqref{eq:feasible-set} is a closed interval in $a'$.
Thus $\mathcal A_t$ is a compact interval. It contains the true coefficient
and is therefore nonempty. Let $d_t=\operatorname{diam}(\mathcal A_t)$.
Since $\mathcal P_t=\mathcal A_t\times\{b\}$,
$m_t(\mathcal P_t)$ is $\phi_b(y_t)$ times the midpoint of $\mathcal A_t$.
The distance from the true coefficient to this midpoint is at most $d_t/2$,
and therefore
\begin{equation}\label{eq:scalar-diameter-state}
 |y_{t+1}|\le W+\frac{d_t}{2}|y_t|^b.
\end{equation}
If $k<t$ and $y_k\ne0$, feasibility of any $a',a''\in\mathcal A_t$ gives
\[
 |a'-a''|\,|y_k|^b
 \le |y_{k+1}-u_k-a'\phi_b(y_k)|
    +|y_{k+1}-u_k-a''\phi_b(y_k)|
 \le2W.
\]
Taking the supremum over $a',a''\in\mathcal A_t$ yields
\begin{equation}\label{eq:scalar-feasible-diameter}
 d_t\le\frac{2W}{|y_k|^b}.
\end{equation}

Let $\tau_0=\inf\{t\ge0:|y_t|>1\}$. If $\tau_0=\infty$, the state is
bounded by one. Suppose that $\tau_0<\infty$, set $X_0=|y_{\tau_0}|$, and
define $A_{\max}=\max_{a'\in\mathcal A}|a'|$.
If $\tau_0=0$, then $X_0\le Y$. If $\tau_0>0$, then
$|y_{\tau_0-1}|\le1$, while both
$|a\phi_b(y_{\tau_0-1})|$ and $|u_{\tau_0-1}|$ are at most $A_{\max}$. Thus
$X_0\le\max\{Y,2A_{\max}+W\}$.

For each $n\ge0$ with $\tau_n<\infty$, define the next record time by
\[
 \tau_{n+1}=\inf\{t>\tau_n:|y_t|>X_n\}.
\]
Whenever $\tau_{n+1}<\infty$, set
$X_{n+1}=|y_{\tau_{n+1}}|$ and $L_{n+1}=\log X_{n+1}$, with
$L_0=\log X_0$. The finite record values satisfy $X_{n+1}>X_n$. For every
finite $\tau_n$, the definition gives
$|y_t|\le X_n$ for $\tau_n\le t<\tau_{n+1}$. In particular, if
$\tau_1<\infty$, the state equation at time $\tau_1-1$ gives
\[
 X_1\le
 2A_{\max}\max\{Y,2A_{\max}+W\}^b+W.
\]

Fix $n\ge1$ such that $\tau_{n+1}<\infty$, and set
$t=\tau_{n+1}-1$. Since $\tau_{n-1}<t$, the transition from
$y_{\tau_{n-1}}$ is among the constraints defining $\mathcal A_t$.
Moreover, $|y_t|\le X_n$. Applying
\eqref{eq:scalar-feasible-diameter} with $k=\tau_{n-1}$ in
\eqref{eq:scalar-diameter-state} gives
\[
 X_{n+1}\le
 W\left[1+\left(\frac{X_n}{X_{n-1}}\right)^b\right]
 \le (1+2W)\exp\{b(L_n-L_{n-1})\}.
\]
Thus, with $c\defeq\log(1+2W)$,
\begin{equation}\label{eq:scalar-log-record-direct}
 L_{n+1}\le b(L_n-L_{n-1})+c,
 \qquad n\ge1,\quad \tau_{n+1}<\infty.
\end{equation}

Define
\[
 \gamma\defeq\inf_{r\ge1}\{r-b(1-r^{-1})\}.
\]
The derivative of $r-b+b/r$ is $1-b/r^2$. For $0<b\le1$, the function is
nondecreasing on $[1,\infty)$ and its minimum is $1$. For $1<b<4$, its
minimum is attained at $r=\sqrt b$ and equals $2\sqrt b-b>0$. Hence
$\gamma>0$.
The bounds on $X_0$ and $X_1$ allow us to choose
$\Lambda>1$, depending only on $Y,W,b$, and $A_{\max}$, such that
\[
 L_0<\Lambda,
 \qquad L_1<\Lambda\quad\text{if }\tau_1<\infty,
 \qquad
 \frac{c}{\Lambda}\le\min\left\{1,\frac{\gamma}{2}\right\}.
\]

Let $j$ be the first index with $L_j\ge\Lambda$. If no such index exists,
then $|y_t|\le e^\Lambda$ for all $t$. Hence assume that $j$ exists. Then
$j\ge2$. By the minimality of $j$, $L_{j-1}<\Lambda$, while
$L_{j-2}>0$. Eq.~\eqref{eq:scalar-log-record-direct} at $n=j-1$
therefore gives $L_j<b\Lambda+c$. If $\tau_{j+1}=\infty$, then
$|y_t|\le X_j\le e^{b\Lambda+c}$ after $\tau_j$. Hence assume that
$\tau_{j+1}<\infty$. For every finite record with index $n\ge j+1$, define
$\rho_n=L_n/L_{n-1}$. The record values are strictly increasing, so
$\rho_n>1$. Dividing \eqref{eq:scalar-log-record-direct} at $n=j$ by $L_j$
and using $c/L_j\le c/\Lambda\le1$ gives
$\rho_{j+1}\le b(1-L_{j-1}/L_j)+1\le b+1$.
If $n\ge j+1$ and $\tau_{n+1}<\infty$, then
\[
 \rho_{n+1}
 \le b(1-\rho_n^{-1})+\frac{c}{L_n}
 \le b(1-\rho_n^{-1})+\frac{\gamma}{2}
 \le\rho_n-\frac{\gamma}{2}.
\]
Choose an integer $N>2b/\gamma$. If $\tau_{j+N+1}$ were finite, iterating this
inequality would give $\rho_{j+N+1}<1$, a contradiction. Hence at most $N$
records occur after $\tau_j$. Since
$\rho_{n+1}\le\rho_n-\gamma/2$, every finite ratio after $\tau_j$ is at
most $\rho_{j+1}\le b+1$. Consequently,
\[
 L_{j+s}\le(b+1)^sL_j
 \le(b+1)^N(b\Lambda+c),
 \qquad 0\le s\le N,\quad \tau_{j+s}<\infty.
\]
Since $|y_t|\le X_n$ for $\tau_n\le t<\tau_{n+1}$, exponentiating the last
display and including the cases in which $j$ or $\tau_{j+1}$ does not exist
gives
\[
 \sup_{t\ge0}|y_t|
 \le \exp\!\left\{
      \max\bigl\{\Lambda,(b+1)^N(b\Lambda+c)\bigr\}\right\}.
\]
The quantities on the right depend only on $Y,W,b$, and $A_{\max}$, as
claimed.
\end{proof}

\subsection{Finite exponent sets}

The following lemma extends Proposition~\ref{prop:scalar} from a known
exponent to finitely many possible exponents when the coefficient interval is
separated from zero.

\begin{lemma}\label{lem:finite-away-zero}
Let $\mathcal B\subset(0,\infty)$ be nonempty and finite with
$\max_{b\in\mathcal B}b<4$, and let
$0<\underline a\le\overline a<\infty$.
For every $Y\ge0$, there is a constant
$C=C(Y,\underline a,\overline a,\mathcal B,W)<\infty$ such that, for every
compact interval $I\subset\R\setminus\{0\}$ satisfying
$\underline a\le|a|\le\overline a$ for all $a\in I$, the controller
\eqref{eq:full-midpoint-controller}, with $\mathcal P_t$ initialized by
$I\times\mathcal B$, satisfies
\[
 \sup_{\substack{(a,b)\in I\times\mathcal B,\ |y_0|\le Y,\ t\ge0\\
                   (w_k)_{k\ge1}:\ \sup_{k\ge1}|w_k|\le W}}
 |y_t|\le C.
\]
\end{lemma}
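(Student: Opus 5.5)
We follow the record-time argument of Proposition~\ref{prop:scalar}, and add a mechanism that removes the wrong exponents once the state is large. Write $\mathcal B=\{\beta_1<\cdots<\beta_r\}$ and $\delta=\min_i(\beta_{i+1}-\beta_i)>0$ (take $\delta=1$ if $r=1$). For each $\beta\in\mathcal B$ put $\mathcal A_t(\beta)=\{a:(a,\beta)\in\mathcal P_t\}$. Each of these sets is a compact interval contained in $I$, because every constraint in \eqref{eq:feasible-set} is a closed interval in $a$.

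\textbf{Step 1: a separation lemma.} Suppose two states $y_k,y_j$ with $1<|y_k|<|y_j|$ were observed before time $t$, and suppose $|\log|y_j|-\log|y_k||\ge\log 2$. We claim that for $(a,\beta),(a',\beta')\in\mathcal P_t$ with $\beta\ne\beta'$, feasibility at both states forces $\log|y_k|$ to be bounded by a constant $\Lambda_1(\underline a,\overline a,\mathcal B,W)$.

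To see this, we argue as in Lemma~\ref{lem:two-scale}. The two drifts agree within $2W$ at $y_k$ and $y_j$. Once $\underline a|y_k|^{\beta_1}\ge4W$, both coefficients have the same sign. Then $\Psi(s)=\log(|a'|/|a|)+(\beta'-\beta)s$ satisfies $|\Psi|\le 4W\underline a^{-1}e^{-\beta_1 L}$ at $L=\log|y_k|$ and at $L=\log|y_j|$. Hence the slope satisfies
\[
|\beta'-\beta|\le \frac{8W\underline a^{-1}e^{-\beta_1\log|y_k|}}{\log 2}.
\]
This quantity is less than $\delta$ once $\log|y_k|$ is large. Therefore, after two sufficiently large record states have been observed, $\mathcal P_t$ contains only the true exponent $b^\star$.

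\textbf{Step 2: reduction to the scalar recurrence.} Use the records $\tau_n$, $X_n$, $L_n$ exactly as in Proposition~\ref{prop:scalar}. Once $\mathcal P_t=\mathcal A_t(b^\star)\times\{b^\star\}$, the controller \eqref{eq:full-midpoint-controller} coincides with the scalar midpoint controller on the interval $\mathcal A_t(b^\star)$. Then \eqref{eq:scalar-diameter-state} and \eqref{eq:scalar-feasible-diameter} give
\[
L_{n+1}\le b^\star(L_n-L_{n-1})+c .
\]
This holds as soon as $\tau_{n-2}$ is beyond the elimination threshold; the records at $\tau_{n-2}$ and $\tau_{n-1}$ provide two separated states, since records double. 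Before that threshold, and for the first few records after it, we use a coarse bound. Since $|m_t(\mathcal P_t)|$ and all drifts are at most $\overline a|y_t|^{\beta_r}$, we get $L_{n+1}\le\beta_rL_n+C_0$, as in \eqref{eq:top-coarse-record}. This bounds the level at which the scalar regime begins by a constant depending only on $Y,\underline a,\overline a,\mathcal B,W$.

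From that point on, the $\gamma$-decrement argument of Proposition~\ref{prop:scalar} applies with $b=b^\star<4$. To keep it uniform, use $\gamma=\min_{\beta\in\mathcal B}\inf_{r\ge1}\{r-\beta(1-r^{-1})\}>0$. The argument then limits the number of further records to $N$, and each such record multiplies $L$ by at most $\beta_r+1$. The conclusion is a bound $C$ independent of $I$ beyond $\underline a,\overline a$.

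\textbf{Main obstacle.} The delicate step is the transition period. The scalar recurrence needs the elimination to be complete, and it also needs the anchor $\tau_{n-1}$ to lie in the elimination regime. Meanwhile the initial ratio $\rho_{j+1}$ must stay controlled. We handle this by choosing $\Lambda$ large in terms of $\Lambda_1$, $c$ and $\gamma$. We then let $j$ be the first record with $L_{j-2}\ge\Lambda$. The coarse bound then gives $L_j\le\beta_r^2\Lambda+(\beta_r+1)C_0$, and from $j$ onward we run the decrement iteration, noting that $\rho_{j+1}\le\beta_r+1$.

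Before the elimination is complete, the midpoint may be taken over a union of intervals for different exponents. This is harmless, because only the coarse growth bound is used in that phase.
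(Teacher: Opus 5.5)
Your argument is sound in outline and takes a different route from the paper, but one step needs repair as written. You declare the records of Proposition~\ref{prop:scalar}, which are strict new maxima, $\tau_{n+1}=\inf\{t>\tau_n:|y_t|>X_n\}$. These need not double, so $L_{n-1}-L_{n-2}$ can be arbitrarily small, and your claim that ``records double'' (which your two-state slope bound requires) fails for those records. There are two easy fixes.
\begin{enumerate}[label=\textup{(\alph*)}]
\item Use the doubling records \eqref{eq:record-times} of Proposition~\ref{prop:away-zero-core}. Then $|y_t|\le 2X_n$ between records, and the scalar recurrence holds with $c$ replaced by, e.g., $\log(1+W(1+2^{\beta_r}))$.
\item Replace Step~1 by a one-state separation, which needs no log-gap. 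The transition from $y_{\tau_{n-1}}$ is already observed at time $\tau_{n+1}-1$.
\end{enumerate}
Separately, with $j$ the first index satisfying $L_{j-2}\ge\Lambda$, the coarse bound gives $L_j<\beta_r^3\Lambda+(\beta_r^2+\beta_r+1)C_0$. Your bound $\beta_r^2\Lambda+(\beta_r+1)C_0$ is the bound for $L_{j-1}$. This changes only a constant.

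The paper's proof is shorter because it exploits finiteness more directly.
\begin{enumerate}[label=\textup{(\roman*)}]
\item There is $R_*$ with $\underline aR^{b_k}-\overline aR^{b_j}>2W$ for all $R\ge R_*$ and all $b_j<b_k$ in $\mathcal B$. Since the coefficients share a sign, the single observed transition from the first state with $|y_\tau|\ge R_*$ leaves only $b^\star$ feasible.
\item Two coarse steps bound the state by $Y_1$ up to time $\tau+1$.
\item From $\tau+1$ on, the controller coincides with the scalar midpoint controller initialized at $I_{\tau+1}\times\{b^\star\}$. Proposition~\ref{prop:scalar} is then invoked as a black box, using that its bound depends on the coefficient interval only through $\max|a|\le\overline a$, followed by a maximum over the finitely many $b^\star$.
\end{enumerate}
Your route instead interleaves elimination with the record recurrence, using a Lemma~\ref{lem:two-scale}-type slope estimate and a transition phase, and re-runs the $\gamma$-decrement with $\gamma$ minimized over $\mathcal B$. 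It is self-contained and mirrors the two-scale mechanism of Section~\ref{sec:positive}. However, that mechanism is unnecessary here, because the discrete gap between exponents lets one large observation do the elimination. The paper's restart avoids all of the transition-phase bookkeeping you identified as the main obstacle.
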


\begin{proof}
If $\mathcal B$ is a singleton, the result follows from
Proposition~\ref{prop:scalar}. Assume henceforth that $\mathcal B$ contains at
least two elements. Let $p^\star=(a^\star,b^\star)\in I\times\mathcal B$,
$|y_0|\le Y$, and let the disturbance sequence satisfy
\eqref{eq:disturbance}. Let $\mathcal P_t$ be the resulting feasible sets.
Since $p^\star\in\mathcal P_t$ for every $t$, these sets are nonempty. All
coefficients in $I$ have a common sign. Put
$\overline b=\max_{b\in\mathcal B}b$. Since $\mathcal B$ is finite,
there is an $R_*>1$, depending only on
$\underline a,\overline a,\mathcal B$, and $W$, such that
\begin{equation}\label{eq:finite-uniform-separation}
 \underline aR^{b_k}-\overline aR^{b_j}>2W
 \quad\text{for every }R\ge R_*
 \text{ and every }b_j<b_k\text{ in }\mathcal B.
\end{equation}
Indeed, each difference on the left tends to infinity as $R\to\infty$, and
only finitely many pairs occur.

Define
\[
 \tau=\inf\{t\ge0:|y_t|\ge R_*\}.
\]
If $\tau=\infty$, then the trajectory is bounded by $R_*$. Suppose that
$\tau<\infty$. If $\tau>0$, then
$|y_{\tau-1}|<R_*$. Every $p\in\mathcal P_{\tau-1}$ then satisfies
$|G_p(y_{\tau-1})|\le\overline aR_*^{\overline b}$, and the same bound holds
for $|m_{\tau-1}(\mathcal P_{\tau-1})|$. Define
\[
 Y_0=\max\{Y,R_*,2\overline aR_*^{\overline b}+W\},
 \qquad
 Y_1=\max\{Y_0,2\overline aY_0^{\overline b}+W\}.
\]
When $\tau>0$, the state equation at time $\tau-1$ gives
$|y_\tau|\le2\overline aR_*^{\overline b}+W\le Y_0$. When $\tau=0$,
$|y_\tau|\le Y\le Y_0$. At time $\tau$,
$\max_{p\in\mathcal P_\tau}|G_p(y_\tau)|$ and
$|m_\tau(\mathcal P_\tau)|$ are at most
$\overline aY_0^{\overline b}$. Hence
$|y_{\tau+1}|\le2\overline aY_0^{\overline b}+W\le Y_1$.

After the transition from $y_\tau$ has been observed, two distinct exponents
cannot remain feasible. Indeed, if
$(a_j,b_j),(a_k,b_k)\in\mathcal P_{\tau+1}$ with $b_j<b_k$, feasibility gives
\[
 |G_{(a_j,b_j)}(y_\tau)-G_{(a_k,b_k)}(y_\tau)|\le2W.
\]
Since all coefficients have one sign, the left-hand side is at least
\[
 \underline a|y_\tau|^{b_k}
 -\overline a|y_\tau|^{b_j}>2W
\]
by \eqref{eq:finite-uniform-separation}, a contradiction. Hence every
parameter in $\mathcal P_{\tau+1}$ has exponent $b^\star$. Let
\[
 I_{\tau+1}=\{a'\in I:(a',b^\star)\in\mathcal P_{\tau+1}\}.
\]
For the fixed exponent $b^\star$, every constraint in
\eqref{eq:feasible-set} is a closed interval in $a'$. Hence
$I_{\tau+1}$ is a nonempty compact interval containing $a^\star$, and
\[
 \mathcal P_{\tau+1}=I_{\tau+1}\times\{b^\star\}.
\]
All observations through time $\tau$ are encoded in $\mathcal P_{\tau+1}$.
Starting at time $\tau+1$, apply the controller
\eqref{eq:full-midpoint-controller} with initial state $y_{\tau+1}$ and
initial parameter set $\mathcal P_{\tau+1}$.
At time $\tau+1$, this process and the original process have the same state and
feasible set. If the two states and feasible sets agree at time $t$, then the
inputs agree by \eqref{eq:full-midpoint-controller}. The state equation,
with the same $p^\star$ and $w_{t+1}$, gives the same next state, and
\eqref{eq:feasible-set} then gives the same feasible set at time $t+1$.
Thus the two processes have the same $y_t$, $u_t$, and $\mathcal P_t$ for every
$t\ge\tau+1$. Applying
Proposition~\ref{prop:scalar} from the state $y_{\tau+1}$ gives a state bound
depending only on $Y_1,W,b^\star$, and $\overline a$.
Before time $\tau+1$, the state is bounded by $Y_1$. Taking the maximum over
the finite set $\mathcal B$ gives a bound depending only on
$Y,\underline a,\overline a,\mathcal B$, and $W$.
\end{proof}

\begin{proof}[Proof of Theorem~\ref{thm:finite-set}]
We first prove sufficiency. Suppose that $\max_{b\in\mathcal B}b<4$ and fix
$Y\ge0$. Consider any true parameter, initial state, and disturbance sequence
in \eqref{eq:uniform-state-bound} with
$\mathcal P=\mathcal A\times\mathcal B$. Initialize \eqref{eq:feasible-set}
with this set and use zero input until
$t_{\rm a}=\inf\{s\ge1:|y_s|>2W\}$. If $t_{\rm a}=\infty$, then
$|y_t|\le2W$ for $t\ge1$.

Suppose that $t_{\rm a}<\infty$ and apply
Lemma~\ref{lem:first-exit-reduction} to
$\mathcal P=\mathcal A\times\mathcal B$. Set
$y_{\rm a}=y_{t_{\rm a}}$ and define the coefficient interval
\[
 \mathcal A_{t_{\rm a}}=
 \left[
 \min\{a:(a,b)\in\mathcal P_{t_{\rm a}}\},
 \max\{a:(a,b)\in\mathcal P_{t_{\rm a}}\}
 \right].
\]
The interval $\mathcal A_{t_{\rm a}}$ is compact, does not contain zero,
contains the true coefficient, and satisfies the coefficient bounds in
\eqref{eq:first-exit-reduction}.

From time $t_{\rm a}$ onward, use the controller from
Lemma~\ref{lem:finite-away-zero} on
$\mathcal A_{t_{\rm a}}\times\mathcal B$, with
initial state $y_{\rm a}$. The interval $\mathcal A_{t_{\rm a}}$ and the state
$y_{\rm a}$ are determined by the history at time $t_{\rm a}$, so the
resulting law is causal. The bounds in
\eqref{eq:first-exit-reduction} and Lemma~\ref{lem:finite-away-zero} bound the
state from time $t_{\rm a}$ onward. Together with $|y_0|\le Y$ and
$|y_t|\le2W$ for $1\le t<t_{\rm a}$, this proves sufficiency.

For necessity, suppose that $\max_{b\in\mathcal B}b\ge4$ and choose
$b_j\in\mathcal B$ with $b_j\ge4$. The nondegenerate interval $\mathcal A$
contains a nondegenerate compact subinterval
$I\subset\mathcal A\setminus\{0\}$. Apply
\cite[Theorem~2.3]{LiXie2006} on $I$ with $f=\phi_{b_j}$. Its lower growth
condition holds with equality because $|\phi_{b_j}(y)|=|y|^{b_j}$. Its
stability requirement is the same global boundedness requirement as
Definition~\ref{def:global-robust}. Hence $I\times\{b_j\}$ is not robustly
stabilizable. Robust stabilization of $\mathcal A\times\mathcal B$ would imply
stabilization of
$I\times\{b_j\}$, which is a contradiction.
\end{proof}

\section{Proofs of the extensions}

\subsection{Proof of Corollary~\ref{cor:shape-factor}}

Put $G_p^h(y)=h(y)G_p(y)$. We first consider a nonempty compact set
\[
 \mathcal P\subset
 \{(a,b):\underline a\le|a|\le\overline a,
                 \ b_-\le b\le\overline b\le3\sqrt3/2\},
\]
where $\underline a>0$ and $b_->0$. Define the feasible sets and the subsets
on which the exponent is maximal as in Section~\ref{sec:positive}, with
$G_p^h$ in place of $G_p$. Since $h(y_t)>0$ is known and does not depend on
$p$, the midpoint of $G_p^h(y_t)$ over a nonempty compact set $\mathcal S$ is
\[
 h(y_t)m_t(\mathcal S).
\]
Fix $Y\ge0$, $p^\star=(a^\star,b^\star)\in\mathcal P$, and $|y_0|\le Y$,
and let the disturbance sequence satisfy \eqref{eq:disturbance}.
Use the control $u_t=-h(y_t)m_t(\mathcal F_t)$. Feasibility at any previously
observed state $x=y_k$, $0\le k<t$, gives the first inequality below for every
$p\in\mathcal F_t$. The control law and the system equation give the second:
\begin{equation}\label{eq:shape-comparison-bounds}
 |G_p(x)-G_{p^\star}(x)|\le\frac{2W}{\underline h},
 \qquad
 |y_{t+1}|\le W+\overline h
 \max_{p\in\mathcal F_t}|G_p(y_t)-G_{p^\star}(y_t)|.
\end{equation}
When $|y_t|\le1$, every $|G_p^h(y_t)|$ with $p\in\mathcal P_t$ is at most
$\overline h\,\overline a$, and therefore
\begin{equation}\label{eq:shape-small-state-bound}
 \max\{1,|y_{t+1}|\}
 \le \max\{1,2\overline h\,\overline a+W\}.
\end{equation}

Set $\tau_0=0$ and $L_0=\log\max\{1,|y_0|\}$. For each $n\ge0$ with
$\tau_n<\infty$, define $\tau_{n+1}$ by \eqref{eq:record-times}. If
$\tau_{n+1}<\infty$, set
$L_{n+1}=\log\max\{1,|y_{\tau_{n+1}}|\}$. At a previously observed state,
the first bound in \eqref{eq:shape-comparison-bounds} allows
Lemmas~\ref{lem:two-scale} and \ref{lem:one-anchor-upper-face} to be applied
with $W/\underline h$ in place of $W$. At the current state, the second bound
multiplies the resulting prediction error by at most $\overline h$. Together
with $|G_p^h(y)|\le\overline h\,\overline a|y|^{\overline b}$ and
\eqref{eq:shape-small-state-bound}, the derivation in
Proposition~\ref{prop:away-zero-core} yields a nonnegative constant $C_h$ and a
level $\Lambda_0^h\ge0$. They depend only on
$\underline a,\overline a,b_-,\overline b,W,\underline h$, and $\overline h$.
Set
\begin{equation}\label{eq:shape-upper-face-kappa}
  \kappa_h=\min\left\{1,
  \frac{1}{e\overline h\,\overline a\,2^{\overline b}}\right\}.
\end{equation}
 The resulting record estimates are
 \begin{align}
  L_{n+1}&\le\overline bL_n+C_h,
  \label{eq:shape-coarse-record}\\
  L_{n+1}&\le
  b^\star(L_n-L_{n-2})+\log(1+L_n)+C_h,
  \label{eq:shape-two-state-record}\\
  L_{n+1}&\le\overline b(L_n-L_{n-1})+C_h.
  \label{eq:shape-one-state-record}
 \end{align}
 Eq.~\eqref{eq:shape-coarse-record} holds for $n\ge0$ whenever
 $\tau_{n+1}<\infty$. Eq.~\eqref{eq:shape-two-state-record} holds for
 $n\ge2$ whenever $\tau_{n+1}<\infty$ and
 $L_{n-2}\ge\Lambda_0^h$. Eq.~\eqref{eq:shape-one-state-record} holds for
 $n\ge1$ whenever $\tau_{n+1}<\infty$, $L_{n-1}\ge\Lambda_0^h$, and
 \[
  \overline b-b^\star
   \le\kappa_h\frac{e^{-\overline bL_n}}{1+L_n}.
 \]
 Since $L_n+\log2\le1+L_n$, this condition gives
 \[
  \overline h\,\overline a e^{\overline b(L_n+\log2)}
  (\overline b-b^\star)(L_n+\log2)
  e^{(\overline b-b^\star)(L_n+\log2)}
   \le e\,\overline h\,\overline a\,2^{\overline b}\kappa_h\le1.
 \]
 By \eqref{eq:shape-upper-face-kappa}, the second summand on the right of
 \eqref{eq:one-anchor-upper-face}, after multiplication by $\overline h$, is at
 most one. The first summand, with $W/\underline h$ in place of $W$ and then
 multiplied by $\overline h$, is absorbed into $C_h$.

The hypotheses of Lemma~\ref{lem:record-closure} follow from
 \eqref{eq:record-increment}, \eqref{eq:shape-coarse-record},
 \eqref{eq:shape-two-state-record}, and \eqref{eq:shape-one-state-record}, with
 $L_{\max}=\log(\max\{1,Y\})$, $C_0=C_h$, $\Lambda_0=\Lambda_0^h$, and
 $\kappa=\kappa_h$ from \eqref{eq:shape-upper-face-kappa}. The lemma therefore
 gives a common upper bound for all $L_n$, and \eqref{eq:between-records}
 gives a uniform state bound.

Now let $\mathcal P$ be any nonempty compact parameter set with
$\overline b=\max_{(a,b)\in\mathcal P}b\le3\sqrt3/2$, and set
$\overline a=\max_{(a,b)\in\mathcal P}|a|$ and
$b_-=\min_{(a,b)\in\mathcal P}b>0$. Fix
$p^\star\in\mathcal P$, $|y_0|\le Y$, and a disturbance sequence satisfying
\eqref{eq:disturbance}, and use zero input until
$t_{\rm a}=\inf\{s\ge1:|y_s|>2W\}$. If $t_{\rm a}=\infty$, then
$|y_s|\le2W$ for $s\ge1$. Otherwise, let $S=\max\{1,Y,2W\}$. Since
$u_{t_{\rm a}-1}=0$ and $|y_{t_{\rm a}}|>2W$, feasibility gives, for every
$p=(a,b)\in\mathcal P_{t_{\rm a}}$,
$|G_p^h(y_{t_{\rm a}-1})|\ge|y_{t_{\rm a}}|-W>W$.
Because $|G_p^h(y_{t_{\rm a}-1})|
\le |a|\overline hS^{\overline b}$, it follows that
$|a|>W/(\overline hS^{\overline b})$. All values
$G_p^h(y_{t_{\rm a}-1})$, $p\in\mathcal P_{t_{\rm a}}$, are within $W$ of
$y_{t_{\rm a}}$ and therefore have the same nonzero sign. The true transition,
for which the input is zero, also gives
$|y_{t_{\rm a}}|\le\overline a\,\overline hS^{\overline b}+W$.
Starting at time $t_{\rm a}$, initialize the feasible sets by
$\mathcal P_{t_{\rm a}}$ and apply
$u_t=-h(y_t)m_t(\mathcal F_t)$. The uniform bound established above applies
because every parameter in $\mathcal P_{t_{\rm a}}$ satisfies
$W/(\overline hS^{\overline b})\le|a|\le\overline a$, while
$|y_{t_{\rm a}}|\le\overline a\,\overline hS^{\overline b}+W$. Both
$t_{\rm a}$ and
$\mathcal P_{t_{\rm a}}$ are determined by the observed history, so the
switched law is causal. The resulting state bound is uniform over
$p^\star\in\mathcal P$, $|y_0|\le Y$, and all disturbance sequences satisfying
\eqref{eq:disturbance}.

We next prove the converse. Fix $p_0=(a_0,b_0)$ with
$b_0>3\sqrt3/2$, a compact rectangle $Q_0$ containing $p_0$ in its interior,
an arbitrary feedback law, and $R_0>0$. Lemma~\ref{lem:critical-constant}
and continuity allow us to choose, inside $Q_0$, a compact rectangle
$Q=\mathcal A\times[b_-,b_+]$ whose coefficient interval does not contain
zero, and a number $z>1$ such that
\[
 b_- -\frac{b_+}{z^2}>z.
\]
Let $\sigma$ be the sign of the coefficients in $\mathcal A$ and write
\[
 T_\sigma(a,b)=(\log|a|,b),
 \qquad
 T_\sigma^{-1}(\alpha,b)=(\sigma e^\alpha,b).
\]
For $\eta=(\alpha,b)$ and $L\in\R$, put $\ell_\eta(L)=\alpha+bL$. The image
of $Q$ under $T_\sigma$ is
\[
 \widetilde Q\defeq T_\sigma(Q)
 =[\alpha_-,\alpha_+]\times[b_-,b_+].
\]
For $\eta=(\alpha,b)$, $y\ne0$, and $L=\log|y|$, write
\[
 G_\eta^h(y)
 \defeq G_{T_\sigma^{-1}(\eta)}^h(y)
 =h(y)\sigma\operatorname{sgn}(y)e^{\ell_\eta(L)}.
\]
Set
\[
 \alpha_{\rm m}=\frac{\alpha_-+\alpha_+}{2},
 \qquad
 b_{\rm m}=\frac{b_-+b_+}{2},
 \qquad
 d=\frac{\alpha_+-\alpha_-}{8}.
\]
Let $L_0>0$ and set $y_0=e^{L_0}$. After $u_0$ is selected,
choose $\eta_0$ from
\[
 \{(\alpha_{\rm m}-d,b_{\rm m}),
   (\alpha_{\rm m}+d,b_{\rm m})\}
\]
to maximize $|u_0+G_\eta^h(y_0)|$, and set
$y_1=u_0+G_{\eta_0}^h(y_0)$. The inequality
$\max\{|u+x|,|u+y|\}\ge|x-y|/2$ gives
\[
 |y_1|
 \ge \underline h\,
       \frac{e^d-e^{-d}}2e^{\alpha_{\rm m}+b_{\rm m}L_0}.
\]
For all sufficiently large $L_0$, the right-hand side is at least
$e^{zL_0}$. Hence $y_1\ne0$, and
\[
 L_1\defeq\log|y_1|\ge zL_0.
\]

Set $v_1=(1,-L_0^{-1})$. Since $\ell_{v_1}(L_0)=0$, the parameters
$\eta_0$ and $\eta_0+dv_1$ give the same drift at $y_0$. For all sufficiently
large $L_0$, both parameters belong to $\widetilde Q$. After $u_1$ is
selected, choose $\eta_1$ from $\{\eta_0,\eta_0+dv_1\}$ to maximize
$|u_1+G_\eta^h(y_1)|$, and set $y_2=u_1+G_{\eta_1}^h(y_1)$. The two values of
$\ell_\eta(L_1)$ differ by at least $d(z-1)$. The inequality
$\max\{|u+x|,|u+y|\}\ge|x-y|/2$ gives
\[
 |y_2|
 \ge \frac{\underline h}{2}e^{\alpha_-+b_-L_1}
       \bigl(e^{d(z-1)}-1\bigr).
\]
For all sufficiently large $L_0$, this lower bound is at least $e^{zL_1}$.
Hence $y_2\ne0$, and
\[
 L_2\defeq\log|y_2|\ge zL_1.
\]
Write $\eta_1=(\alpha_1,b_1)$. The definitions of $\eta_0$ and $\eta_1$ give
\eqref{eq:escape-initial-parameter-bounds}.

For this proof, define
\[
 s_k=\frac{W(z-1)}{8z\overline h}e^{-\alpha_+-b_+L_k},
 \qquad k\ge0.
\]
Choose $L_0$ large enough that the conditions in
\eqref{eq:escape-large-initial-level} hold after substituting
$s_0=W(z-1)e^{-\alpha_+-b_+L_0}/(8z\overline h)$, and
that
\[
 \log\!\left(\frac{W(z-1)^2\underline h}{16z\overline h}\right)
 +\alpha_--\alpha_++
 \left(b_--\frac{b_+}{z^2}-z\right)L_0\ge0.
\]

Fix $n\ge2$ and suppose that $L_j\ge zL_{j-1}$ has been established for
$1\le j\le n$. Use the direction $v_n$ in
\eqref{eq:escape-update-direction}.
With the present definition of $s_k$, the calculation in
\eqref{eq:escape-candidate-displacement} shows that both
\[
 \eta_{n-1},
 \qquad
 \eta_{n-1}+s_{n-2}v_n
\]
belong to $\widetilde Q$. After $u_n$ is known, choose between them to maximize
$|u_n+G_\eta^h(y_n)|$, denote the selected parameter by $\eta_n$, and set
$y_{n+1}=u_n+G_{\eta_n}^h(y_n)$. Since $h(y_n)\ge\underline h$,
\eqref{eq:escape-update-properties} gives
\[
 |y_{n+1}|
 \ge \frac{z-1}{2}\underline h\,s_{n-2}
      e^{\alpha_-+b_-L_n}
 =\frac{W(z-1)^2\underline h}{16z\overline h}
  e^{\alpha_--\alpha_++b_-L_n-b_+L_{n-2}},
\]
so $y_{n+1}\ne0$. Define $L_{n+1}=\log|y_{n+1}|$. Then
\[
 L_{n+1}
 \ge\log\!\left(\frac{W(z-1)^2\underline h}{16z\overline h}\right)
     +\alpha_--\alpha_++b_-L_n-b_+L_{n-2}
 \ge zL_n.
\]
This proves the induction step. The growth inequalities and the first
condition in \eqref{eq:escape-large-initial-level} give
$s_{n+1}\le s_n/2$ for $n\ge0$. By
\eqref{eq:escape-update-direction}, the magnitudes of the two coordinates of
$v_n$ are at most $z/(z-1)$ and $1/((z-1)L_{n-2})$, respectively. Thus the
sums of the absolute changes in both coordinates of $\eta_n$ are finite. Hence
$(\eta_n)_{n\ge0}$ converges to some $\eta^\star\in\widetilde Q$.

The initialization and the recursive choices give
\[
 \ell_{\eta_1-\eta_0}(L_0)=0,
 \qquad
 \ell_{\eta_{k+1}-\eta_k}(L_k)=0,
 \quad k\ge1,
\]
where the second equality follows from
\eqref{eq:escape-update-properties}. For all $k\ge0$ and $n\ge k+2$, the
update direction satisfies
$|\ell_{v_n}(L_k)|\le z/(z-1)$. Summing the remaining updates and using
$s_{j+1}\le s_j/2$ for $j\ge0$ gives
\[
 |\ell_{\eta^\star-\eta_k}(L_k)|
 \le\frac{2z}{z-1}s_k
 =\frac{W}{4\overline h}e^{-\alpha_+-b_+L_k}.
\]
Consequently,
\[
 |G_{\eta_k}^h(y_k)-G_{\eta^\star}^h(y_k)|
 \le\overline h e^{\alpha_++b_+L_k}
      |\ell_{\eta_k-\eta^\star}(L_k)|
 \le\frac W4.
\]
Set $p^\star=T_\sigma^{-1}(\eta^\star)\in Q$ and, for $k\ge0$, define
\[
 w_{k+1}=G_{\eta_k}^h(y_k)-G_{\eta^\star}^h(y_k).
\]
Then $|w_{k+1}|\le W$ and
\[
 y_{k+1}=G_{p^\star}^h(y_k)+u_k+w_{k+1},
 \qquad k\ge0.
\]
Thus the fixed parameter $p^\star$ and the fixed disturbance sequence
$(w_{k+1})_{k\ge0}$ generate the escaping trajectory. Since $Q\subset Q_0$
and the initial level can be chosen so that $e^{L_0}\ge R_0$, $Q_0$ is not
robustly stabilizable. Finally, when
$b_0=3\sqrt3/2$, every compact rectangle containing $p_0$ in its interior
contains a point with a larger exponent. The positive result and the converse
just proved give the local classification.

\subsection{Proof of Corollary~\ref{cor:parameter-map}}

For a feedback law $\mu=(\mu_t)_{t\ge0}$, define
\[
 \widetilde\mu_t(y_0,\ldots,y_t)
 =\mu_t(y_0,\ldots,y_t)+g_0(y_t),
 \qquad t\ge0.
\]
Because $g_0$ is known and $y_t$ is observed when the input is chosen,
$\widetilde\mu$ is causal. Subtracting $g_0(y_t)$ defines the inverse, so the
transformation is a bijection between feedback laws. If
$u_t=\mu_t(y_0,\ldots,y_t)$ is used in \eqref{eq:general-parameter-map} and
$\widetilde u_t=\widetilde\mu_t(y_0,\ldots,y_t)$ is used in
\eqref{eq:system} with parameter $p=H(\xi)$, then
$\widetilde u_t=u_t+g_0(y_t)$ and the two state equations are identical. For
every compact $K\subset\Xi$, the two closed loops therefore have the same
state trajectory for each initial state and disturbance sequence. Hence robust
stabilization over $K$ is equivalent to robust stabilization of the
system~\eqref{eq:system} over $H(K)$, and the uniform estimate
\eqref{eq:uniform-state-bound} is preserved.

Continuity of $H$ and compactness of $\Xi$ imply that $H(\Xi)$ is compact.
Part~(i) now follows from Theorem~\ref{thm:main}(i).

For part~(ii), fix a compact neighborhood $K\subset\Xi$ with
$\xi_0\in\operatorname{int}K$. Since $DH(\xi_0)$ has rank two, it has a
nonzero $2\times2$ minor. Let $e_i,e_j$ be the coordinate vectors associated
with the columns of this minor. Since $\xi_0\in\operatorname{int}K$ and $H$
is continuously differentiable near $\xi_0$, there is an open neighborhood
$O_0\subset\R^2$ of $(0,0)$ such that
$\xi_0+se_i+te_j\in K$ for every $(s,t)\in O_0$. Define
\[
 \widehat H:O_0\to\R^2,
 \qquad
 \widehat H(s,t)=H(\xi_0+se_i+te_j).
\]
The map $\widehat H$ is continuously differentiable, and its derivative at
$(0,0)$ is the chosen minor and is therefore invertible. By the inverse
function theorem
\cite[Theorem~9.24]{Rudin1976}, there are open neighborhoods $O\subset O_0$
of $(0,0)$ and $V$ of $H(\xi_0)$ such that $\widehat H$ maps $O$
diffeomorphically onto $V$.
Consequently, $V\subset H(K)$.
Because $B(\xi_0)\ge3\sqrt3/2$, there is a point
$p_1=(a_1,b_1)\in V$ with $b_1>3\sqrt3/2$. Choose a compact rectangle
$Q\subset V$ with $p_1\in\operatorname{int}Q$. Theorem~\ref{thm:main}(ii)
shows that $Q$ is not robustly stabilizable. If $\mu$ stabilized
\eqref{eq:general-parameter-map} over $K$, then $\widetilde\mu$ would
stabilize \eqref{eq:system} over $H(K)$ and hence over $Q$. This is a
contradiction.

For part~(iii), choose a nondegenerate compact interval $J$ containing
$A(\Xi)$.
Then $H(\Xi)\subset J\times B(\Xi)$. Since $B(\Xi)$ is finite and
$\sup_{\xi\in\Xi}B(\xi)<4$, Theorem~\ref{thm:finite-set} shows that
$J\times B(\Xi)$, and hence its subset $H(\Xi)$, is robustly stabilizable.
\bibliographystyle{plain}
\bibliography{references}

\end{document}